\newtheorem*{theorem_1*}{Main Theorem}
\newtheorem*{theorem_2*}{Theorem $A$}
\newtheorem*{theorem_3*}{Theorem $B$}
\newtheorem{theorem}{Theorem}[section]
\newtheorem{lemma}[theorem]{Lemma}
\newtheorem{proposition}[theorem]{Proposition}
\theoremstyle{definition}
			\newtheorem{example}[theorem]{Example}
\newtheorem{definition}[theorem]{Definition}
\newtheorem{remark}[theorem]{Remark}
\newtheorem*{remark*}{Remark}
\newcommand{\ZZ}{\mathbb{Z}}
\newcommand{\QQ}{\mathbb{Q}}
\newcommand{\RR}{\mathbb{R}}
\newcommand{\te}{\widetilde{e}}
\newcommand{\tf}{\widetilde{f}}
\newcommand{\thh}{\widetilde{h}}
\newcommand{\tv}{\widetilde{v}}
\newcommand{\tG}{\widetilde{G}}
\newcommand{\tcalB}{\widetilde{\calB}}
\newcommand{\al}{\alpha}
\newcommand{\ga}{\gamma}
\newcommand{\de}{\delta}
\newcommand{\Ga}{\Gamma}
\newcommand{\tal}{\widetilde{\alpha}}
\newcommand{\tbe}{\widetilde{\beta}}
\newcommand{\tga}{\widetilde{\gamma}}
\newcommand{\tde}{\widetilde{\delta}}
\newcommand{\tGa}{\widetilde{\Gamma}}
\DeclareMathOperator{\dil}{dil}
\DeclareMathOperator{\Ker}{Ker}
\DeclareMathOperator{\co}{Coker}
\DeclareMathOperator{\Hom}{Hom}
\DeclareMathOperator{\Sym}{Sym}
\DeclareMathOperator{\Vol}{Vol}
\let\Pr\relax
\DeclareMathOperator{\Pr}{Pr}
\newcommand{\calB}{\mathcal{B}}
\DeclareMathOperator{\Id}{Id}
\DeclareMathOperator{\Nm}{Nm}
\DeclareMathOperator{\Prym}{Prym}
\DeclareMathOperator{\Gram}{Gram}
\DeclareMathOperator{\Jac}{Jac}
\newcommand{\Dmitry}[1]{{\color{blue}{\texttt Dmitry: #1}}}
\title[The Prym variety of a dilated double cover of metric graphs]{The Prym variety of a dilated double cover of metric graphs}
 \author{Arkabrata Ghosh}
 \address{}
 \email{\href{mailto:arka2686@gmail.com}{arka2686@gmail.com}}
   \author{Dmitry Zakharov}
  \address{Department of Mathematics, Central Michigan University, Mount Pleasant, MI 48859, USA}
 \email{\href{mailto:dvzakharov@gmail.com}{dvzakharov@gmail.com}}
\subjclass[2010]{14T20; 14H40}
\begin{document}

\maketitle

\begin{abstract}
We calculate the volume of the tropical Prym variety of a harmonic double cover of metric graphs having non-trivial dilation. We show that the tropical Prym variety behaves discontinuously under deformations of the double cover that change the number of connected components of the dilation subgraph.
\end{abstract}

\section{Introduction}

Tropical geometry studies discrete, piecewise-linear analogues of algebro-geometric objects. For example, the tropical analogue of an algebraic curve is a connected finite graph $G$, and the \emph{Jacobian} $\Jac(G)$ (also known as the \emph{critical group} of $G$) is a finite abelian group. We can endow $G$ with positive real edge lengths to obtain a \emph{metric graph} $\Ga$; this promotes the Jacobian to a real torus $\Jac(\Ga)$ equipped with an additional integral structure. The dimension of $\Jac(\Ga)$ is equal to the first Betti number $g(\Ga)=b_1(\Ga)$ of $\Ga$, also known as the \emph{genus} of $\Ga$.

The tropical analogue of a morphism of algebraic curves is a harmonic morphism of graphs. Topological covering spaces are examples of harmonic morphisms. More generally, harmonic morphisms of finite graphs allow nontrivial degrees at vertices, also known as \emph{dilation}, while harmonic morphisms of metric graphs also allow dilation along edges. A harmonic morphism of metric graphs is \emph{free} if it has no dilation (in other words, if it is a covering isometry), and \emph{dilated} otherwise. A harmonic morphism has a well-defined global degree, and a \emph{double cover} is a harmonic morphism of metric graphs of degree two. 

A classical algebraic construction associates to an \'etale degree two cover $\widetilde{X}\to X$ of smooth algebraic curves a principally polarized abelian variety (ppav) of dimension $g(X)-1$, called the \emph{Prym variety} $\Prym(\widetilde{X}/X)$ of the double cover (see~\cite{MR1010103}). The kernel of the norm map $\Nm:\Jac(\widetilde{X})\to \Jac(X)$ has two connected components, and $\Prym(\widetilde{X}/X)$ is the even connected component (containing the identity). The ppav $\Prym(\widetilde{X}/X)$ carries a principal polarization that is half of the polarization induced from $\Jac(\widetilde{X})$.

The tropical Prym variety was defined in~\cite{MR3782424} and further investigated in~\cite{MR4261102}, in complete analogy with the algebraic setting. A double cover $\pi:\tGa\to \Ga$ of metric graphs induces a norm map $\Nm:\Jac(\tGa)\to \Jac(\Ga)$. It is shown in~\cite{MR3782424, MR4261102} that the kernel of $\Nm$ has two connected components if $\pi$ is free and one if $\pi$ is dilated. In the free case, the connected component of the identity carries a principal polarization that is half the induced polarization, just as in the algebraic setting. In the dilated case, the kernel also carries a principal polarization, whose relationship to the induced polarization was computed in~\cite{rohrle2022tropical}. In either case, the corresponding principally polarized tropical abelian variety is called the \emph{Prym variety} $\Prym(\tGa/\Ga)$ of the double cover $\pi:\tGa\to \Ga$. 

\subsection*{The volume formulas} Kirchhoff's matrix tree theorem states that the order of the Jacobian group $\Jac(G)$ of a finite graph $G$ is equal to the number of its spanning trees. A weighted version of this result for metric graphs was proved in~\cite{MR3264262}: the square of the volume of the Jacobian $\Jac(\Ga)$ of a metric graph $\Ga$ of genus $g$ is given by a degree $g$ polynomial in the edge lengths of $\Ga$, whose monomials correspond to the spanning trees. Specifically, 
\[
\Vol^2(\Jac(\Ga))=\sum_{C\subset E(\Ga)} \prod_{e\in C}\ell(e),
\]
where the sum is taken over all $g$-element sets of edges of $\Ga$ that are complements of spanning trees.

An analogous formula for the volume of the Prym variety of a free double cover was proved in~\cite{MR4382460}. Let $\pi:\tGa\to \Ga$ be a free double cover of metric graphs of genera $2g-1$ and $g$, respectively, so that $\Prym(\tGa/\Ga)$ has dimension $g-1$. By analogy with~\cite{MR3264262}, the square of the volume of the Prym should be given by a polynomial with terms indexed by certain $(g-1)$-element sets of edges of $\Ga$, which play the role of spanning trees for double covers. The relevant notion was already defined by Zaslavsky in~\cite{zaslavsky1982signed} in the context of signed graphs, and was rediscovered by the authors of~\cite{MR4382460}, who were not aware of this earlier work. A set $F\subset E(\Ga)$ of $g-1$ edges is an \emph{odd genus one decomposition} (or \emph{relative spanning tree}) of \emph{rank} $r(F)$ if $\Ga\backslash F$ consists of $r(F)$ connected components, each having connected preimage in $\tGa$. It then turns out that the volume is given by a sum
\[
\Vol^{2}(\Prym(\tGa/\Ga))=\sum_{F\subset E(\Ga)} 4^{r(F)-1} \prod_{e\in F}\ell(e),
\]
over all genus one decompositions $F\subset E(\Ga)$, with each term additionally weighted according to the rank $r(F)$.

\subsection*{Our results}
In this paper, we generalize the volume formula to dilated double covers. As a first step, we extend the definition of odd genus one decompositions, or \emph{ogods}, to the dilated case (see Definition~\ref{def:ogod}). The volume of the Prym variety is then given by a similar formula.

\begin{theorem}(Theorem~\ref{thm:main})
Let $\pi:\tGa\to\Ga$ be a dilated double cover of metric graphs. The volume of the tropical Prym variety of $\pi:\tGa\to\Ga$ is given by
\begin{equation}
\Vol^2(\Prym(\tGa/\Ga))=2^{1-d(\tGa/\Ga)}\sum_{F\subset E(\Ga)} 4^{r(F)-1}\prod_{e\in F}\ell(e).
\label{eq:mainformulaintro}
\end{equation}
The sum is taken over all $h$-element ogods of $E(\Ga)$ (see Definition~\ref{def:ogod}), where $h=g(\tGa)-g(\Ga)$ is the dimension of $\Prym(\tGa/\Ga)$ and $r(F)$ is the rank of an ogod, and $d(\tGa/\Ga)$ is the number of connected components of the dilation subgraph $\Ga_{\dil}$ of $\Ga$.

\end{theorem}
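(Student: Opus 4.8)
The plan is to reduce the dilated case to the free case by passing to a suitable intermediate cover, and then to apply the Kirchhoff-type formula for metric graph Jacobians together with the known free Prym volume formula. The starting point is the structure of the norm map $\Nm : \Jac(\tGa) \to \Jac(\Ga)$ and the description of $\Prym(\tGa/\Ga)$ as its kernel (which is connected in the dilated case). Concretely, I would first contract the dilation subgraph $\Ga_{\dil} \subset \Ga$ and its preimage, or rather work with the cover $\tGa \to \Ga$ directly but track how the dilated edges and vertices contribute to the relevant lattices. The key linear-algebraic object is the period matrix (Gram matrix) of $\Prym(\tGa/\Ga)$ with respect to the polarization that is half the induced one; by the general principle behind the Kirchhoff--Prym formula, $\Vol^2$ of this ppav is a sum over bases of the relevant lattice $H_1(\tGa,\ZZ)^-$ (the anti-invariant part of homology) of products of edge lengths, weighted by squared indices of sublattices. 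The combinatorial heart is to identify which $h$-element edge sets $F \subset E(\Ga)$ index nonzero terms — these are precisely the ogods of Definition~\ref{def:ogod} — and to compute the associated weight, which should come out to $2^{1-d(\tGa/\Ga)} \cdot 4^{r(F)-1}$.

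The main technical steps, in order, are: (1) set up the anti-invariant homology lattice $\La = H_1(\tGa,\ZZ)^-$ together with the bilinear form coming from edge lengths, and express $\Vol^2(\Prym(\tGa/\Ga))$ via a Cauchy--Binet expansion over bases of $\La$ adapted to the edge decomposition of $\tGa$; (2) organize edges of $\tGa$ according to whether they lie over dilated or undilated edges of $\Ga$, and over dilated edges note that the fiber is a single edge (with doubled or appropriately scaled length contribution), so that a basis of $\La$ projects to a collection of edges downstairs; (3) show that a projected $h$-element edge set $F$ gives a nonzero contribution if and only if $\Ga \setminus F$, together with the dilation data, satisfies the ogod condition — each component of $\Ga \setminus F$ must have the property that its preimage behaves like a connected (odd) piece, i.e.\ supports no anti-invariant cycles of its own; (4) compute the lattice index for such an $F$: each of the $r(F)$ components contributes a factor, with the global normalization $2^{1-d(\tGa/\Ga)}$ accounting for the dilation components and the per-component factor being $4$ for all but one component, exactly as in the free case but shifted by the dilation contribution. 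Throughout, the comparison with the free-cover formula of~\cite{MR4382460} serves as a sanity check: when $d(\tGa/\Ga) = 0$ (no dilation), formula~\eqref{eq:mainformulaintro} must specialize to the known one.

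The hard part will be step (3)–(4): precisely matching the lattice-theoretic weight attached to an edge set $F$ with the combinatorial invariants $r(F)$ and $d(\tGa/\Ga)$ of the corresponding ogod. In the free case this amounts to a determinant computation with $2 \times 2$ blocks giving the $4^{r(F)-1}$; in the dilated case the dilation vertices and edges change the local structure of the cover over each component of $\Ga \setminus F$, and one must verify that the anti-invariant homology of each such component-preimage is either trivial (contributing to the product) or nonzero (killing the term), and that the surviving configurations are exactly the ogods. I would handle this by a local analysis: restrict the cover to (a neighborhood of) each connected component of $\Ga \setminus F$, classify the possible cover types according to whether the dilation subgraph meets that component and whether the restricted cover is connected, and compute the rank of the anti-invariant homology and the relevant covolume contribution in each case. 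Assembling these local contributions multiplicatively, and carefully tracking the global factor of $2$ coming from the relation between the anti-invariant lattice and its ``half-polarized'' refinement (the source of the $2^{1-d}$ and the $4^{r-1}$ rather than $2^{r-1}$), then yields~\eqref{eq:mainformulaintro}. A secondary subtlety is ensuring that no edge set $F$ outside the ogod family sneaks in a nonzero term, which requires showing the bilinear form degenerates exactly when $\Ga \setminus F$ fails the ogod condition; this should follow from the same local classification.
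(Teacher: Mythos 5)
Your overall strategy (a direct Cauchy--Binet expansion of the Gram determinant of the lattice $\Ker\pi_*$ over $h$-element edge sets) is a genuinely different route from the paper, which instead proves a purely combinatorial factorization $J(\tGa)=2^{1-m_d+n_d-2d}\Pr(\tGa/\Ga)J(\Ga)$ by deforming the dilated cover to a free one (resolving dilated vertices by loop attachments, contracting and deleting dilated edges) and separately compares the volumes of $\Jac(\tGa)$, $\Jac(\Ga)$ and $\Prym(\tGa/\Ga)$ via explicit homology bases. Your route is the one used in~\cite{MR4382460} for the free case, so it is not implausible; but as written it contains two concrete errors, and the step that carries all the content is only announced, not carried out.

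First, you normalize by ``the polarization that is half the induced one.'' That is correct only for free covers. For a dilated cover the induced polarization on $\Prym(\tGa/\Ga)$ has type $(1,\ldots,1,2,\ldots,2)$ relative to the intrinsic principal polarization, with $d(\tGa/\Ga)-1$ ones and $g(\Ga)-m_d+n_d-d$ twos (Proposition~\ref{prop:pp}, taken from~\cite{rohrle2022tropical}). This mixed type is precisely the source of the prefactor $2^{1-d(\tGa/\Ga)}$; dividing uniformly by $2$ in every direction gives the wrong constant. Second, your proposed sanity check fails: formula~\eqref{eq:mainformulaintro} does \emph{not} specialize to the free formula~\eqref{eq:LZ} at $d(\tGa/\Ga)=0$ --- formally substituting $d=0$ produces an extra factor of $2$. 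The two formulas agree at $d=1$, not $d=0$, because $\Ker\Nm$ has two connected components for a free cover and only one for a dilated cover. If you calibrate your global lattice-index constant against the $d=0$ specialization, you will be off by a factor of $2$. Finally, steps (3)--(4) --- identifying the nonvanishing Cauchy--Binet terms with ogods and evaluating the index as $4^{r(F)-1}$ times the global factor --- are the entire theorem, and your plan for them (a local classification of the cover over each component of $\Ga\setminus F$) is not executed; the paper deliberately sidesteps this direct computation in the dilated case by reducing to the free case, where~\cite{MR4382460} already did it.
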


The theorem is proved by performing a series of deformations to the base curve $\Ga$ in such a way that the double cover becomes free, and then applying the results of~\cite{MR4382460} (see Theorem~\ref{thm:A}). At the same time, it is necessary to explicitly compute the pushforward and pullback maps on the simplicial homology groups $H_1(\tGa,\ZZ)$ and $H_1(\Ga,\ZZ)$ for a dilated double cover; these calculations appeared in~\cite{rohrle2022tropical} and we restate them here for convenience. We then compute the relationship between the volumes of the three tropical ppavs attached to a dilated double cover $\pi:\tGa\to\Ga$ (see Theorem~\ref{thm:B}). Putting these two theorems together, we obtain our main result. 

We note that Equation~\eqref{eq:mainformulaintro} shows that the volume of the Prym variety does not behave continuously under deformations of the cover that change the number $d(\tGa/\Ga)$ of connected components of the dilation subgraph (see Example~\ref{ex:disc}). In other words, the object $\Prym(\tGa/\Ga)$ does not behave well in moduli, suggesting that perhaps it needs to be somehow redefined, at least in the dilated case. 

As a final note, we observe that the ogods of a dilated double cover $\pi:\tGa\to\Ga$ appear to form the bases of a matroid on the set of undilated edges of $\Ga$, generalizing the signed graphic matroid introduced by Zaslavsky in~\cite{zaslavsky1982signed}. We are not aware if this matroid has been considered before, and we plan to study it in future work.

\subsection*{Acknowledgements} The authors would like to thank Yoav Len and Felix R\"ohrle for insightful discussions.

\section{Setup and notation}

In this section, we recall a number of standard definitions concerning metric graphs, harmonic morphisms, double covers, and tropical abelian varieties. We also recall the Jacobian $\Jac(\Ga)$ of a metric graph $\Ga$ and the Prym variety $\Prym(\tGa/\Ga)$ of a harmonic double cover $\tGa\to\Ga$ of metric graphs. Finally, we recall how to compute the volumes of $\Jac(\Ga)$ (see~\cite{MR3264262}) and $\Prym(\tGa/\Ga)$ for a free double cover $\tGa\to\Ga$ (see~\cite{MR4382460}). 

\subsection{Graphs, metric graphs, and double covers}

A \emph{graph} $G$ consists of a non-empty finite set of \emph{vertices} $V(G)$ and a set of \emph {edges} $E(G)$. We allow loops and multiple edges between a pair of vertices. It is convenient to view each edge $e\in E(G)$ as consisting of two \emph{half-edges} $e=\{h,h'\}$, so that $E(G)$ is the set of orbits of a fixed-point-free involution acting on the set of half-edges $H(G)$. The \emph{root map} $r:H(G)\to V(G)$ attaches half-edges to vertices, and the set of half-edges $T_v(G)=r^{-1}(v)$ attached to a given vertex $v$ is the \emph{tangent space}. The \emph{genus} of a graph is its first Betti number (we do not use vertex weights):
\[
g(G)=b_1(G)=\#E(G)-\#V(G)+1.
\]
An \emph{orientation} of $G$ is a choice of ordering of each edge, and defines \emph{source} and \emph{target} maps $s,t:E(G)\to V(G)$.

A \emph{morphism of graphs} $f:\tG\to G$ is a pair of maps $f:V(\tG)\to V(G)$ and $f:H(\tG)\to H(G)$ that commute with the root map and preserve edges. A \emph{harmonic morphism of graphs} $(f,d_f):\tG\to G$ is a pair consisting of a morphism of graphs $f:\tG\to G$ and a \emph{degree function} $d_f:V(\tG)\cup E(\tG)\to \ZZ_{>0}$ satisfying
\[
d_f(\tv)=\sum_{\thh\in T_{\tv}\tG\cap f^{-1}(h)}d_f(\thh)
\]
for any $\tv\in V(\tG)$ and any $h\in T_{f(\tv)}G$ (where we denote $d_f(\thh)=d_f(\thh')=d_f(\te)$ for an edge $\te=\{\thh,\thh'\}$). If $G$ is connected, a harmonic morphism has a \emph{global degree} $\deg(f)$ given by
\[
\deg(f)=\sum_{\tv\in f^{-1}(v)}d_f(\tv)=\sum_{\thh\in f^{-1}(h)}d_f(\thh)
\]
for any $v\in V(G)$ or any $h\in H(G)$.

A \emph{double cover} $p:\tG\to G$ is a harmonic morphism of global degree two. There are two types of vertices $v\in V(G)$: \emph{undilated}, having two pre-images $p^{-1}(v)=\{\tv^+,\tv^-\}$ with $d_p(\tv^{\pm})=1$, and \emph{dilated}, having a single pre-image $p^{-1}(v)=\{\tv\}$ with $d_p(\tv)=2$. We similarly define dilated and undilated half-edges and edges of $G$. A dilated half-edge may only be rooted at a dilated vertex, hence the set of dilated edges and vertices forms a subgraph called the \emph{dilation subgraph} $G_{\dil}\subset G$ of $f$. We say that $f$ is \emph{free} if $G_{\dil}$ is empty, \emph{dilated} if $G_{\dil}$ is not empty, and \emph{edge-free} if $G_{\dil}$ consists of isolated vertices only. We note that dilated double covers should not be thought of as somehow "more degenerate" than free double covers; in fact, arguably the converse is true, since the latter arise as tropicalizations of more degenerate algebraic double covers than the former.


A \emph{metric graph} $\Ga$ is a compact metric space obtained from a graph $G$ by identifying each edge $e\in E(G)$ with a closed interval of positive length $\ell(e)$, identifying the endpoints with the vertices of $G$, and endowing $\Ga$ with the shortest-path metric. The pair $(G,\ell)$ is called a \emph{model} for $\Ga$. The \emph{genus} $g(\Ga)$ of a metric graph is its first Betti number, and is equal to the genus of any model. A \emph{harmonic morphism of metric graphs} $\phi:\tGa\to \Ga$ is a continuous, piecewise-linear map with nonzero integer-valued slopes $d_{\phi}(\te)=d_f(\te)$ along the edges $\te\in E(\tGa)$ given by the degree function $d_f$ of a harmonic morphism $f:\tG\to G$ between some models $(\tG,\widetilde{\ell})$ and $(G,\ell)$ of $\tGa$ and $\Ga$, respectively. The slope condition imposes the restriction
\[
\ell(\phi(\te))=d_{\phi}(\te) \widetilde{\ell}(\te),\quad \te\in E(\tG).
\]
We similarly define double covers of metric graphs. A double cover $\pi:\tGa\to\Ga$ is locally an isometry along an undilated edge and a factor 2 dilation along a dilated edge, which explains the terminology.

Finally, we recall how to contract a harmonic morphism along a subgraph of the target. Let $f:\tG\to G$ be a harmonic morphism and let $G_0\subset G$ be a possibly disconnected subgraph. We define $G'$ from $G$ by contracting each connected component of $G_0$ to a separate vertex. We similarly define $\tG'$ by contracting each connected component of $f^{-1}(G_0)$ to a separate vertex. Finally, we define $f':\tG'\to G'$ by setting $d_{f'}(\tv)$ on a vertex $\tv\in V(\tG')$ corresponding to a contracted component of $f^{-1}(G_0)$ to be equal to the global degree of $f$ on that component. The result is a harmonic morphism $f':\tG'\to G'$, called the \emph{contraction of $f$ along $G_0$}. We similarly define contractions of harmonic morphisms of metric graphs. 

Since we are principally interested in double covers of metric graphs, we henceforth assume that any metric graph $\Ga$ comes equipped with a choice of model. Hence we abuse notation and write $E(\Ga)$, $V(\Ga)$, and so on for a metric graph $\Ga$. The principal results of our paper do not depend on the choice of underlying model.

\begin{example}
\label{ex:main}

\noindent Figure~\ref{fig:example1} shows a dilated double cover $\pi: \tGa \to \Ga$ of metric graphs. Fat edges and vertices indicate dilation.

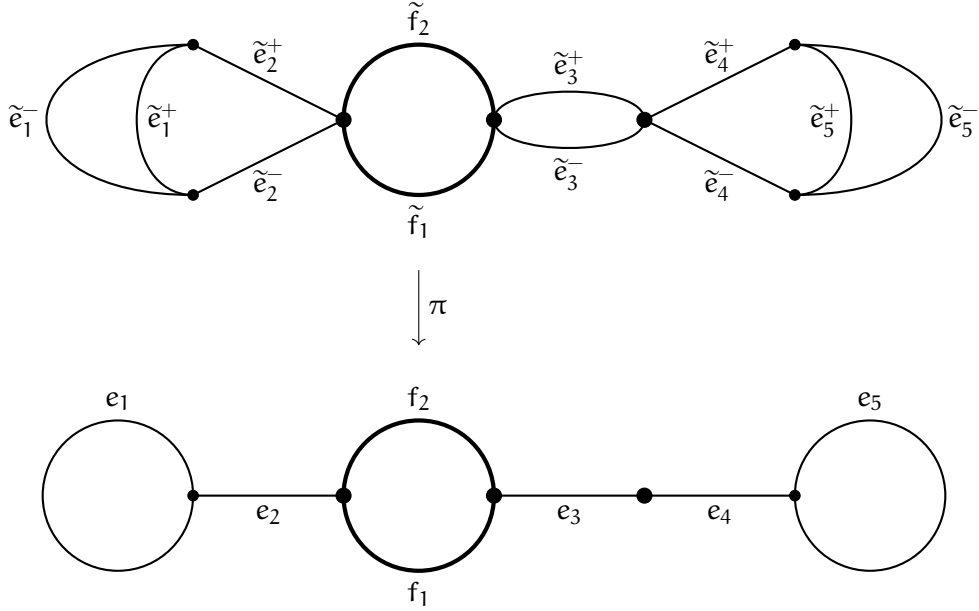
\begin{figure}

\begin{tikzpicture}

\draw[thick] (-4,1) .. controls (-6.6,1) and (-6.6,-1) .. (-4,-1);
\draw[thick] (-4,1) .. controls (-5,1) and (-5,-1) .. (-4,-1);
\node[right] at (-4.75,0) {$\te^+_1$};
\node[left] at (-5.9,0) {$\te^-_1$};
\draw[fill](-4,1) circle(0.07);
\draw[fill](-4,-1) circle(0.07);
\draw[thick] (-4,1) to (-2,0);
\draw[thick] (-4,-1) to (-2,0);
    \node[above] at (-3,0.5) {$\te^+_2$};
    \node[below] at (-3,-0.5) {$\te^-_2$};
\draw[ultra thick] (-1,0) circle(1);
    \node[above] at (-1,1) {$\widetilde{f}_2$};
    \node[below] at (-1,-1) {$\widetilde{f}_1$};

\draw[fill](-2,0) circle(0.1);
\draw[fill](0,0) circle(0.1);
\draw[thick] (0,0) .. controls (0,0.5) and (2,0.5) .. (2,0);
\draw[thick] (0,0) .. controls (0,-0.5) and (2,-0.5) .. (2,0);

\draw[fill](2,0) circle(0.1);

    \node[above] at (1,0.35) {$\te^+_3$};
    \node[below] at (1,-0.35) {$\te^-_3$};

\draw[thick] (4,1) .. controls (6.6,1) and (6.6,-1) .. (4,-1);
\draw[thick] (4,1) .. controls (5,1) and (5,-1) .. (4,-1);
\node[left] at (4.75,0) {$\te^+_5$};
\node[right] at (5.9,0) {$\te^-_5$};
\draw[fill](4,1) circle(0.07);
\draw[fill](4,-1) circle(0.07);
\draw[thick] (4,1) to (2,0);
\draw[thick] (4,-1) to (2,0);
    \node[above] at (3,0.5) {$\te^+_4$};
    \node[below] at (3,-0.5) {$\te^-_4$};

\path[->] (-1,-2) edge node[right]{$\pi$} (-1,-3);

\begin{scope}[yshift=-5cm]

\draw[thick] (-5,0) circle(1);
\node[above] at (-5,1) {$e_1$};%
\draw[fill](-4,0) circle(0.07);
\draw[thick] (-4,0) -- (-2,0);
\node[below] at (-3,0) {$e_2$};%
\draw[fill](-2,0) circle(.10);
\draw[fill](0,0) circle(.10);


\draw[ultra thick] (-1,0) circle(1);
\node[above] at (-1,1) {$f_2$};
\node[below] at (-1,-1) {$f_1$};

\draw[thick] (0,0) -- (2,0);
\node[below] at (1,0) {$e_3$};%
\draw[fill](2,0) circle(.10);
\draw[thick] (2,0) -- (4,0);
\node[below] at (3,0) {$e_4$};%
\draw[fill](4,0) circle(0.07);
\draw[thick] (5,0) circle(1);
\node[above] at (5,1) {$e_5$};%
\end{scope}
\end{tikzpicture}
\captionof{figure}{A dilated double cover} 
\label{fig:example1}
\end{figure}

\end{example}

\subsection{Tropical abelian varieties} The tropical Jacobian of a metric graph and the tropical Prym variety of a harmonic double cover are examples of tropical principally polarized abelian varieties, which are real tori equipped with auxiliary integral structure. We recall their definition (our conventions follow~\cite{MR4382460} and are a slight modification of the standard definitions found in~\cite{foster2018non} and~\cite{MR4261102}).

A \emph{real torus with integral structure} $\Sigma$ of dimension $n$, or \emph{integral torus} for short, is determined by a triple $(\Lambda,\Lambda',[\cdot,\cdot])$, where $\Lambda$ and $\Lambda'$ are finitely generated free abelian groups of rank $n$ and $[\cdot,\cdot]:\Lambda\times\Lambda'\rightarrow\mathbb{R}$ is a non-degenerate pairing. The pairing defines a lattice embedding $\Lambda'\subset\Hom(\Lambda,\mathbb{R})$ via the assignment $\lambda'\mapsto[\cdot,\lambda']$, and the torus itself is the compact abelian group $\Sigma=\Hom(\Lambda,\mathbb{R})/\Lambda'$. The integral structure refers to the lattice $\Hom(\Lambda,\mathbb{Z})\subset\Hom(\Lambda,\mathbb{R})$ in the universal cover of $\Sigma$, and is the tropical analogue of the complex structure on a complex torus.

Let $\Sigma_1=(\Lambda_1,\Lambda_1',[\cdot,\cdot]_1)$ and $\Sigma_2=(\Lambda_2,\Lambda_2',[\cdot,\cdot]_2)$ be integral tori. A \emph{homomorphism of integral tori} $f=(f_\#,f^\#):\Sigma_{1}\rightarrow\Sigma_2$ is given by a pair of maps $f_{\#}:\Lambda_1'\rightarrow\Lambda_2'$ and  $f^{\#}:\Lambda_2\rightarrow\Lambda_1$ satisfying the relation
\[
[f^{\#}(\lambda_2),\lambda_1']_1=[\lambda_2,f_{\#}(\lambda_1')]_2
\]
for all $\lambda_1'\in\Lambda_1'$ and $\lambda_2\in\Lambda_2$. This relation implies that the map $\Hom(\Lambda_1,\mathbb{R})\rightarrow  \Hom(\Lambda_2,\mathbb{R})$ dual to $f^{\#}$ restricts to $f_{\#}$ on $\Lambda'_1$, and hence descends to a group homomorphism $f:\Sigma_1\to\Sigma_2$. 

Let $f=(f_{\#},f^{\#}):\Sigma_1\rightarrow\Sigma_2$ be a homomorphism of integral tori $\Sigma_i=(\Lambda_i,\Lambda_i',[\cdot,\cdot]_i)$ for $i=1,2$. The connected component of the identity of the kernel of $f$, denoted by $(\Ker f)_0$, carries the structure of an integral torus, which we now recall. Let $K=(\co f^\#)^{tf}$ be the quotient of $\co f^{\#}$ by its torsion subgroup, and let $K'=\Ker f_\#$. It is easy to verify that the pairing $[\cdot,\cdot]_1$ induces a well-defined pairing $[\cdot,\cdot]_K : K \times  K' \rightarrow \mathbb{R}$, and that the natural maps $i_\#:K'\rightarrow \Lambda_1'$ and $i^\#:\Lambda_1\rightarrow K$ define an injective homomorphism $i=(i_\#,i^\#): (\Ker f)_0 \rightarrow \Sigma_1$ of integral tori.

Let $\Sigma=(\Lambda,\Lambda',[\cdot,\cdot])$ be an integral torus. A \emph{polarization} on $\Sigma$ is a map $\zeta:\Lambda'\rightarrow\Lambda$ having the property that the induced bilinear form
\[
(\cdot,\cdot):\Lambda'\times\Lambda'\rightarrow\mathbb{R},\quad(\lambda',\mu')=[\zeta(\lambda'),\mu']	
\]
is symmetric and positive definite. The polarization map $\zeta$ is necessarily injective, and is called \emph{principal} if it is bijective. The pair $(\Sigma,\zeta)$ is called a \emph{tropical polarized abelian variety}, and a \emph{tppav} if $\zeta$ is a principal polarization. 

Let $f=(f_\#,f^\#):\Sigma_1\rightarrow\Sigma_2$ be a homomorphism of integral tori, and assume that $f$ has finite kernel (equivalently, $f_{\#}$ is injective).	Given a polarization $\zeta_2:\Lambda_2'\rightarrow\Lambda_2$ on $\Sigma_2$, it is easy to verify that the map $\zeta_1=f^{\#} \circ \zeta_2 \circ f_{\#}:\Lambda_1'\rightarrow\Lambda_1$ defines an \emph{induced polarization} on $\Sigma_1$. The polarization induced by a principal polarization need not itself be principal.

Given a tropical polarized abelian variety $\Sigma=(\Lambda,\Lambda',[\cdot,\cdot])$ of dimension $n$, the associated bilinear form $(\cdot,\cdot)$ on $\Lambda'$ extends to an inner product on the vector space $V=\Hom(\Lambda,\mathbb{R})$. The volume of $\Sigma=V/\Lambda'$ with respect to this product is the volume of a fundamental parallelotope of $\Lambda'$, and is given by the Grammian determinant
\begin{equation}
\Vol^{2}(\Sigma)=\det(\lambda_i',\lambda_j'),
\label{eq:Grammian}
\end{equation}
where $\lambda_1',\ldots,\lambda_n'$  is any basis of $\Lambda'$.

\subsection{Jacobians and Pryms} We now recall how to construct the Jacobian of a metric graph and the Prym variety of a harmonic double cover as tppavs.

Let $\Gamma$ be a metric graph of genus $g$, let $C_0(\Ga,\mathbb{Z})=\mathbb{Z}^{V(\Ga)}$ and $C_{1}(\Ga,\mathbb{Z})=\mathbb{Z}^{E(\Ga)}$ be the groups of $0$-chains and $1$-chains, respectively (with respect to a choice of oriented model), and let $d$ be the simplicial boundary map
\[
d:C_1(\Ga,\mathbb{Z})\to C_0(\Ga,\mathbb{Z}),\quad d\left[\sum_{e \in E(\Ga)}n_{e}e\right]= \sum_{e \in E(\Ga)}n_{e}[t(e)-s(e)].
\]
The first simplicial homology group $H_1(\Ga,\ZZ)=\Ker d$ is a free abelian group of rank $g$. The \emph{edge length pairing} on $H_1(\Ga,\ZZ)$ is given by
\begin{equation}
[\cdot,\cdot]_{\Gamma} : H_{1}(\Ga,\mathbb{Z}) \times H_{1}(\Ga,\mathbb{Z}) \to \mathbb{R},\quad \Bigg[\sum_{\substack{e \in E(\Ga)}} a_{e }e, \sum_{\substack{e \in E(\Ga)}} b_{e }e \Bigg] = \sum_{\substack{e \in E(G)}} a_{e}b_{e}\ell(e).
\label{eq:edgelengthpairing}
\end{equation}
The \emph{Jacobian} $\Jac(\Gamma)$ of $\Ga$ is the dimension $g$ tppav $(\Lambda,\Lambda',[\cdot,\cdot]_{\Gamma})$ where $\Lambda=\Lambda'=H_{1}(\Ga,\mathbb{Z})$, $[\cdot,\cdot]_{\Gamma}$ is the edge length pairing, and the principal polarization $\zeta$ is the identity map on $H_1(\Ga,\mathbb{Z})$. 

\begin{remark} We note that the edge length pairing~\eqref{eq:edgelengthpairing} has a physical peculiarity: it is measured in units of edge lengths of $\Ga$, while the expected units for an inner product are lengths squared. As a consequence, the units of $\ell(e)$ double in dimension when we view the Jacobian variety $\Jac(\Ga)$ as a Riemannian manifold. For example, the Jacobian $\Jac(\Ga)$ of a circle $\Ga$ of circumference $L$ is also a circle, but of circumference $\sqrt{L}$, not $L$.
\label{rem:units}    
\end{remark}

Given a harmonic morphism $\phi:\tGa\to \Ga$ of metric graphs, we define the push-forward and pullback maps (again, with respect to appropriately chosen models)
\[
\phi_*:C_1(\tGa,\ZZ)\to C_1(\Ga,\ZZ),\quad \te\mapsto \phi(\te)
\]
and
\[
\phi^*:C_1(\Ga,\ZZ)\to C_1(\tGa,\ZZ),\quad e\mapsto \sum_{\te\in \phi^{-1}(e)}d_{\phi}(\te)\te.
\]
These maps commute with $d$ and descend to maps
\[
\phi_*:H_1(\tGa,\ZZ)\to H_1(\Ga,\ZZ),\quad \phi^*:H_1(\Ga,\ZZ)\to H_1(\tGa,\ZZ).
\]
The pair $\Nm_\#=\phi_*, \Nm^\#=\phi^*$ defines the surjective \emph{norm homomorphism} $\Nm:\Jac(\tGa)\to \Jac(\Ga)$.

We are interested in the kernel of the norm homomorphism $\Nm:\Jac(\tGa)\to \Jac(\Ga)$ when $\pi:\tGa\to \Ga$ is a harmonic double cover of metric graphs. In this case, the map $\pi^*:H_1(\Ga,\ZZ)\to H_1(\tGa,\ZZ)$ is explicitly given on edges by
\begin{equation}
    \pi^{*}(e)=
\begin{cases}
 2\te, & e\text{ is dilated with preimage }\te,  \\
 \te^++\te^-, & e\text{ is undilated with preimages }\te^{\pm}.
\end{cases}
 \label{eq:pb}
\end{equation}
Unwinding the definitions, the connected component of the identity of the kernel of $\Nm$ is the integral torus
\begin{equation*}
    (\Ker \Nm)_0 = \frac{\Ker \overline{\pi}: \Hom(H_{1}(\tGa,\mathbb{Z}),\mathbb{R} )\to \Hom(H_{1}(\Ga,\mathbb{Z}),\mathbb{R}) }{\Ker \pi_{*}: H_{1}(\tGa, \mathbb{Z}) \to H_{1}(\Ga, \mathbb{Z}) },
\end{equation*}
where $\overline{\pi}$ is the $\Hom$-dual of the map $\pi^{*}$. The principal polarization on $\Jac(\tGa)$ induces a polarization on $(\Ker \Nm)_0$, which is not principal in general. We show in Proposition~\ref{prop:pp} that there is a natural principal polarization on $(\Ker \Nm)_0$, whose type (compared to the induced polarization) depends on the number of connected components of the dilation subgraph $\Ga_{\dil}$. The corresponding tropical ppav is the \emph{Prym variety} $\Prym(\tGa/\Ga)$ of the double cover $\pi:\tGa\to \Ga$.

\subsection{Volume formulas and odd genus one decompositions} Finally, we recall how to compute the volumes of $\Jac(\Ga)$ for a metric graph $\Ga$ and $\Prym(\tGa/\Ga)$ for a free double cover $\tGa\to\Ga$. The formula for the volume of the tropical Prym variety of a dilated double cover is proved in the next section and is the principal result of this paper.

Let $\Ga$ be a metric graph of genus $g$. Since $\Jac(\Ga)$ is a Riemannian manifold of dimension $g$, one may expect the volume of $\Jac(\Ga)$ to be given by a homogeneous degree $g$ polynomial in the edge lengths $\ell(e)$ of $\Ga$. However, due to the dimensional peculiarity noted in Remark~\ref{rem:units}, it is the \emph{square} of the volume of $\Jac(\Ga)$ that is given by such a polynomial, with monomials corresponding to the complements of spanning trees of $\Ga$:

\begin{theorem}[Theorem 1.5 of~\cite{MR3264262}] Let $\Ga$ be a metric graph of genus $g$. The volume of the tropical Jacobian of $\Ga$ is given by
\begin{equation}
\Vol^2(\Jac(\Ga))=\sum_{F\subset E(\Ga)}\prod_{e\in F}\ell(e).
\label{eq:ABKS}
\end{equation}
The sum is taken over all $g$-element subsets $F\subset E(\Ga)$ with the property that $\Ga\backslash F$ is a tree.
\label{thm:ABKS}
\end{theorem}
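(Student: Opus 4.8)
The plan is to reduce the statement to the classical, combinatorial Kirchhoff theorem for finite graphs and then track the edge lengths through the Grammian determinant formula~\eqref{eq:Grammian}. Recall that $\Jac(\Ga)$ is the tppav $(H_1(\Ga,\ZZ), H_1(\Ga,\ZZ), [\cdot,\cdot]_\Ga)$ with identity polarization, so by~\eqref{eq:Grammian} we have $\Vol^2(\Jac(\Ga)) = \det\big([\gamma_i,\gamma_j]_\Ga\big)$ for any $\ZZ$-basis $\gamma_1,\dots,\gamma_g$ of $H_1(\Ga,\ZZ)$. Fixing an oriented model, the cycle space $H_1(\Ga,\ZZ) = \Ker d \subset C_1(\Ga,\ZZ) = \ZZ^{E(\Ga)}$ sits inside the edge space, and the edge length pairing~\eqref{eq:edgelengthpairing} is the restriction of the diagonal bilinear form on $\ZZ^{E(\Ga)}$ with entries $\ell(e)$. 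So the computation is: express $H_1$ as the column span of an integer $E \times g$ matrix $Z$ (whose columns are the basis cycles written in edge coordinates), let $L = \mathrm{diag}(\ell(e))_{e\in E(\Ga)}$, and compute $\det(Z^{\mathsf{T}} L Z)$.

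The key tool is the Cauchy--Binet formula. Applying it to $Z^{\mathsf{T}} L Z = Z^{\mathsf{T}} (L^{1/2})(L^{1/2}) Z$, or more cleanly to the product of the $g\times E$ matrix $Z^{\mathsf{T}} L$ with the $E \times g$ matrix $Z$, gives
\[
\det(Z^{\mathsf{T}} L Z) = \sum_{\substack{F \subset E(\Ga) \\ \#F = g}} \det(Z_F)^2 \prod_{e\in F}\ell(e),
\]
where $Z_F$ is the $g \times g$ submatrix of $Z$ on the rows indexed by $F$. The crux is the combinatorial identity $\det(Z_F)^2 \in \{0,1\}$, with value $1$ precisely when $E(\Ga)\setminus F$ is a spanning tree. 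This is exactly the statement that the cycle lattice is \emph{unimodular}: a $g$-subset $F$ of edges supports a basis of $H_1(\Ga,\ZZ)$ (equivalently $\det Z_F = \pm 1$) if and only if its complement is a spanning tree, and $\det Z_F = 0$ otherwise. I would prove this by the standard argument: $\det Z_F \neq 0$ iff the projection $C_1 \to \ZZ^F$ restricts to an isomorphism on $H_1$ iff $\ZZ^{E\setminus F}$ maps isomorphically onto $C_0/\Im(\text{boundary}) \cong \ZZ^{V-1}$ under $d$ (using connectedness), which says exactly that $E\setminus F$ is a spanning tree; unimodularity then follows because the boundary matrix of a graph is totally unimodular, so the minor on a spanning tree is $\pm 1$.

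The main obstacle — really the only non-formal point — is this unimodularity/matrix-tree input: one must be careful that the \emph{same} subsets $F$ that give nonzero terms also give coefficient exactly $1$ (not some larger square), and that the indexing ``complement of a spanning tree'' is the right one. Once that is in hand, assembling the pieces is routine: plug $\det(Z_F)^2 = \mathbf{1}[E\setminus F \text{ is a tree}]$ into the Cauchy--Binet expansion to obtain $\Vol^2(\Jac(\Ga)) = \sum_{F}\prod_{e\in F}\ell(e)$ over exactly the $g$-element complements of spanning trees, which is~\eqref{eq:ABKS}. (Alternatively, one can cite the proof verbatim from~\cite{MR3264262}; but since the statement is recalled here mainly for context, a short self-contained Cauchy--Binet argument is the cleanest route.)
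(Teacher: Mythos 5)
The paper gives no proof of this statement---it is quoted as Theorem~1.5 of the cited reference---so there is nothing internal to compare against; your argument is correct and is essentially the standard (indeed the original) one. The two pillars are exactly right: Cauchy--Binet applied to $\det(Z^{\mathsf{T}}LZ)$ with $L=\mathrm{diag}(\ell(e))$, and the unimodularity of the cycle lattice, which forces $\det(Z_F)^2$ to be the indicator of ``$E(\Ga)\setminus F$ is a spanning tree.'' The only step deserving full detail is that unimodularity claim; the cleanest version is to note that injectivity of the projection $H_1(\Ga,\ZZ)\to\ZZ^F$ is equivalent to $E(\Ga)\setminus F$ containing no cycle, hence (by the count $\#(E(\Ga)\setminus F)=\#V(\Ga)-1$) to its being a spanning tree, and that for such $F$ the fundamental-cycle basis associated to that tree makes $Z_F$ a signed permutation matrix, so $\det(Z_F)=\pm1$ rather than merely nonzero.
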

It is elementary to verify that the sum on the right-hand side does not in fact depend on the choice of model of $\Ga$. We also note that if $F$ has $g$ elements, then $\Ga\backslash F$ is a tree if and only if it is connected.

An analogous formula for the volume of the tropical Prym variety of a free double cover is the principal result of~\cite{MR4382460}. Let $\pi:\tGa\to \Ga$ be a free double cover of metric graphs of genera $g(\tGa)=2g-1$ and $g(\Ga)=g$, respectively. Since $\Prym(\tGa/\Ga)$ has dimension $g-1$, we expect (see Remark~\ref{rem:units}) $\Vol^2(\Prym(\tGa/\Ga))$ to be given by a degree $g-1$ homogeneous polynomial in the edge lengths of $\Ga$. The monomials should correspond to certain $(g-1)$-element subsets of $E(\Ga)$, playing the same role that complements of spanning trees do for $\Jac(\Ga)$. The correct notion turns out to be the following.

\begin{definition} \label{def:ogodsfree} Let $\pi:\tGa\to \Ga$ be a free double cover of metric graphs of genera $g(\tGa)=2g-1$ and $g(\Ga)=g$, respectively. A set of $g-1$ edges $F\subset E(\Ga)$ is called an \emph{odd genus one decomposition}, or \emph{ogod}, if every connected component of $\Ga\backslash F$ has connected preimage in $\tGa$. The \emph{rank} $r(F)$ of an odd genus one decomposition is the number of connected components of $\Ga\backslash F$.
\end{definition}

An elementary calculation shows that for a set $F\subset E(\Ga)$ of $g-1$ edges with corresponding connected component decomposition $\Ga\backslash F=\Ga_1\cup \cdots\cup \Ga_k$, either $g(\Ga_i)=0$ for some $i$ or $g(\Ga_i)=1$ for all $i$. In the former case $\pi^{-1}(\Ga_i)$ is a trivial double cover (because $\pi_1(\Ga_i)=0$) and hence disconnected, while in the latter case $\pi^{-1}(\Ga_i)$ is connected if and only if the double cover $\pi^{-1}(\Ga_i)\to \Ga_i$ is given (under the Galois correspondence) by the odd (i.e.~notrivial) element of $\Hom(\pi_1(\Ga_i),\ZZ/2\ZZ)\simeq \ZZ/2\ZZ$. This explains our choice of terminology.

The volume of the tropical Prym variety is calculated as a sum over the odd genus one decompositions, with each monomial additionally weighted according to the rank.

\begin{theorem}[Theorem 3.4 in~\cite{MR4382460}] Let $\pi:\tGa\to \Ga$ be a free double cover of metric graphs of genera $2g-1$ and $g$, respectively. The volume of the tropical Prym variety of $\pi:\tGa\to \Ga$ is given by
\begin{equation}
\Vol^2(\Prym(\tGa/\Ga))=\sum_{F\subset E(\Ga)}4^{r(F)-1}\prod_{e\in F}\ell(e),
\label{eq:LZ}
\end{equation}
where the sum is taken over all odd genus one decompositions $F\subset E(\Ga)$.
\label{thm:LZ}
\end{theorem}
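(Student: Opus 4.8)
The plan is to reduce the statement to a Grammian determinant for the lattice underlying $\Prym(\tGa/\Ga)$, expand that determinant by the Cauchy--Binet formula, and match the surviving monomials with odd genus one decompositions. First I would make the Prym lattice and its form explicit. Let $\tau$ be the deck involution of $\pi$. Since $\pi^{*}\circ\pi_{*}=\mathrm{id}+\tau_{*}$ on $H_{1}(\tGa,\ZZ)$ and $\pi^{*}$ is injective, the kernel of $\pi_{*}\colon H_{1}(\tGa,\ZZ)\to H_{1}(\Ga,\ZZ)$ is the anti-invariant subgroup $H_{1}(\tGa,\ZZ)^{-}=\{\gamma:\tau_{*}\gamma=-\gamma\}$. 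Because the cover is free, each edge $e$ of $\Ga$ has two preimages $\te^{+},\te^{-}$ interchanged by $\tau$, and one checks that $H_{1}(\tGa,\ZZ)^{-}$ consists precisely of the cycles $\sum_{e}a_{e}(\te^{+}-\te^{-})$ with $a_{e}\in\ZZ$. Writing $\delta_{e}=\te^{+}-\te^{-}$, this identifies the lattice $\Lambda'$ of $(\Ker\Nm)_{0}$ with the kernel of the boundary map restricted to the $\ZZ$-span of the $\delta_{e}$, a sublattice of $\ZZ^{E(\Ga)}$. Moreover $[\delta_{e},\delta_{e'}]_{\tGa}=2\ell(e)\,\delta_{ee'}$ (the cross terms vanish because $\te^{+}$ and $\te^{-}$ are distinct edges, and each has length $\ell(e)$ since $\pi$ is free); and since the principal polarization of $\Prym(\tGa/\Ga)$ in the free case is one half of the polarization induced from $\Jac(\tGa)$, this factor $2$ is absorbed, so that the quadratic form attached to $\Prym(\tGa/\Ga)$ becomes, under this identification, the restriction to $\Lambda'$ of the diagonal form $\sum_{e}\ell(e)x_{e}^{2}$ on $\ZZ^{E(\Ga)}$. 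By~\eqref{eq:Grammian} we conclude $\Vol^{2}(\Prym(\tGa/\Ga))=\det(\iota^{T}W\iota)$, where $\iota\colon\Lambda'\hookrightarrow\ZZ^{E(\Ga)}$ is the inclusion and $W=\mathrm{diag}(\ell(e))$.

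Next I would encode the cover combinatorially. Fixing a spanning tree of $\Ga$ and trivializing $\pi$ over it, every remaining edge is labelled \emph{twisted} or \emph{untwisted}; this makes $\Ga$ a signed graph, and under the identification above the boundary map becomes the \emph{signed incidence matrix} $D\colon\ZZ^{E(\Ga)}\to\ZZ^{V(\Ga)}$ with $De=t(e)-s(e)$ on untwisted edges and $De=-(t(e)+s(e))$ on twisted ones. Thus $\Lambda'=\Ker D$, a lattice of rank $h=g(\tGa)-g(\Ga)=g-1$, and the Cauchy--Binet formula gives
\[
\Vol^{2}(\Prym(\tGa/\Ga))=\det(\iota^{T}W\iota)=\sum_{\substack{F\subseteq E(\Ga)\\ |F|=h}}(\det\iota_{F})^{2}\prod_{e\in F}\ell(e),
\]
where $\iota_{F}$ denotes the submatrix of $\iota$ on the rows indexed by $F$.

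The core of the argument is to evaluate this sum. Elementary linear algebra shows $\det\iota_{F}\neq0$ if and only if the columns of $D$ indexed by $E(\Ga)\setminus F$ are linearly independent, i.e.\ $E(\Ga)\setminus F$ is a basis of the column matroid of $D$ --- the signed-graphic matroid of~\cite{zaslavsky1982signed}. Such a basis is precisely a spanning subgraph each of whose connected components contains a unique cycle, and that cycle is unbalanced (has an odd number of twisted edges); under the Galois correspondence this says that each component has genus one with connected preimage in $\tGa$, so the complements $F$ of these bases are exactly the odd genus one decompositions of Definition~\ref{def:ogodsfree}, with $r(F)$ the number of components of $\Ga\setminus F$. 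For such an $F$ one has $(\det\iota_{F})^{2}=4^{r(F)-1}$: a short bookkeeping identity gives $|\det\iota_{F}|\cdot|\co D|=|\det D_{E(\Ga)\setminus F}|$; the maximal square block $D_{E(\Ga)\setminus F}$ of the signed incidence matrix has determinant $\pm2^{r(F)}$, each unbalanced unicyclic component contributing a factor $2$; and $|\co D|=2$, since the hypothesis $g(\tGa)=2g-1$ forces $\tGa$ connected, equivalently the signed graph unbalanced, so that $\mathrm{im}\,D$ is the index-two subgroup of $\ZZ^{V(\Ga)}$ of vectors with even coordinate sum. Substituting these coefficients into the Cauchy--Binet sum yields~\eqref{eq:LZ}.

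The step I expect to be the main obstacle is this signed-graph input: characterizing the nonsingular maximal square submatrices of the signed incidence matrix $D$, evaluating their determinants as $\pm2$ raised to the number of connected components, and relating $\det\iota_{F}$ to $\det D_{E(\Ga)\setminus F}$ through the order of $\co D$. This is the combinatorial content that produces the weights $4^{r(F)-1}$, and it is precisely what the authors of~\cite{MR4382460} rediscovered in ignorance of~\cite{zaslavsky1982signed}; the remaining ingredients are formal manipulations with lattices and the Cauchy--Binet formula.
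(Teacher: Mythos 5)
This statement is quoted from~\cite{MR4382460} and is not proved in the present paper, so there is no internal proof to compare your attempt against. Your argument is correct as it stands: the identification of $\Ker\pi_*$ with the anti-invariant lattice spanned by the $\te^+-\te^-$, the Cauchy--Binet expansion of the Grammian with weights $\ell(e)$, and the evaluation $(\det\iota_F)^2=|\det D_{E(\Ga)\setminus F}|^2/|\co D|^2=4^{r(F)-1}$ via Zaslavsky's characterization of the bases of the signed-graphic matroid all check out, and this is essentially the proof of~\cite{MR4382460} recast in signed-graph language --- a translation the present paper itself points out in Remark~\ref{rem:matroids}.
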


\begin{remark} Given a metric graph $\Ga$ of genus $g$, the complements of spanning trees are the bases of the \emph{cographic matroid} $\widetilde{\mathcal{M}}(\Ga)$. Similarly, a free double cover $\pi:\tGa\to \Ga$ determines (after choosing a spanning tree for $\Ga$) the structure of a \emph{signed graph} on $\Ga$, and the odd genus one decompositions are in fact the bases of the corresponding \emph{signed cographic matroid} $\widetilde{\mathcal{M}}(\tGa/\Ga)$ (see~\cite{zaslavsky1982signed}). Hence the sums on the right-hand sides of Equations~\eqref{eq:ABKS} and~\eqref{eq:LZ} are indexed by the bases of certain matroids naturally associated to $\Ga$ and $\pi:\tGa\to \Ga$, respectively. It turns out that the matroids $\widetilde{\mathcal{M}}(\Ga)$ and $\widetilde{\mathcal{M}}(\tGa/\Ga)$ play a fundamental role in the polyhedral geometry of $\Jac(\Ga)$ and $\Prym(\tGa/\Ga)$. These tppavs can be effectively described using respectively the Abel--Jacobi map $\Sym^g(\Ga)\to \Jac(\Ga)$ and the Abel--Prym map $\Sym^{g-1}(\tGa)\to \Prym(\tGa/\Ga)$. The independent sets of the matroids correspond to the cells of the symmetric product on which these maps have full rank. Hence the bases correspond to the top-dimensional cells that define polyhedral decompositions of the tppavs, and thus give geometric meaning to Equations~\eqref{eq:ABKS} and~\eqref{eq:LZ}.

\label{rem:matroids}
    
\end{remark}

\section{The volume formula}

In this section, we prove the main result of our paper, Theorem~\ref{thm:main}, which calculates the volume of the tropical Prym variety of a dilated double cover of metric graphs.

\subsection{Ogods for dilated double covers} Our first task is to extend Definition~\ref{def:ogodsfree} to dilated double covers. This turns out to be straightforward.

\begin{definition} Let $\pi:\tGa\to \Ga$ be a dilated double cover of a connected metric graph $\Ga$, and let $h=g(\tGa)-g(\Ga)$. A set $F\subset E(\Ga)$ of $h$ edges of $\Ga$ is called an \emph{ogod} if no edge in $F$ is dilated, and if each connected component of $\Ga\backslash F$ has connected preimage in $\tGa$. The \emph{rank} $r(F)$ of $F$ is the number of connected components of $\Ga\backslash F$.
\label{def:ogod}
\end{definition}

A connected component $\Ga_i$ of $\Ga\backslash F$ having a dilated vertex automatically has connected preimage in $\tGa$. If a connected component $\Ga_i$ has no dilated vertices, then $\pi^{-1}(\Ga_i)$ is connected only if $g(\Ga_i)\geq 1$, since a free double cover of a tree is trivial. To clarify exposition, and for future use, we give a more precise description of ogods for dilated double covers.

\begin{lemma} Let $\pi:\tGa\to \Ga$ be a dilated double cover of metric graphs, let $F\subset E(\Ga)$ be a set of $h=g(\tGa)-g(\Ga)$ undilated edges of $\Ga$, and let $\Ga\backslash F=\Ga_1\cup\cdots\cup \Ga_k$ be the decomposition of $\Ga\backslash F$ into connected components. Then $F$ is an ogod if and only if each $\Ga_i$ satisfies one of the following (mutually exclusive) conditions:
\begin{enumerate}
    \item $\Ga_i$ contains a unique connected component of the dilation subgraph $\Ga_{\dil}$, and the genus $g(\Ga_i)$ is equal to the genus of this component.\label{item:1}
    
    \item $\Ga_i$ has no dilated vertices or edges, $g(\Ga_i)=1$, and $\Ga_i$ has connected preimage in $\tGa$.\label{item:2}
\end{enumerate}
\label{lemma:ogods}
\end{lemma}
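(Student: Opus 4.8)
The plan is to analyze the Euler characteristic / genus bookkeeping of the decomposition $\Ga\backslash F=\Ga_1\cup\cdots\cup\Ga_k$ and combine it with the connectedness criterion for preimages. First I would recall the general identity relating the genus of $\Ga$ to the genera of the $\Ga_i$ when $F$ is a set of $h$ edges whose removal produces $k$ components: since deleting an edge either drops the genus by one or increases the number of components by one, we have $\sum_{i=1}^k g(\Ga_i) = g(\Ga) - h + (k-1) = g(\Ga) - h + r(F) - 1$. I would then do the analogous count upstairs in $\tGa$, using $\pi^{-1}(F)$ (which consists of $2h$ undilated edges since no edge of $F$ is dilated) and the fact that $\tGa\backslash\pi^{-1}(F) = \bigsqcup_i \pi^{-1}(\Ga_i)$, where each $\pi^{-1}(\Ga_i)$ has either one or two components. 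This gives a second genus equation in terms of $g(\tGa)$, the $g(\pi^{-1}(\Ga_i))$, and the number of $\Ga_i$ with disconnected preimage.

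Next I would import the local structure of a double cover: a component $\Ga_i$ meeting the dilation subgraph $\Ga_{\dil}$ has connected preimage automatically (a dilated vertex has a single preimage of degree two, so the preimage cannot split), while a component disjoint from $\Ga_{\dil}$ carries a free double cover, which is connected iff the associated class in $\Hom(\pi_1(\Ga_i),\ZZ/2\ZZ)$ is nontrivial — in particular it is disconnected whenever $g(\Ga_i)=0$, and when connected and disjoint from $\Ga_{\dil}$ one has $g(\pi^{-1}(\Ga_i)) = 2g(\Ga_i)-1$. For a component containing dilation, I would use the genus formula for a harmonic double cover restricted to $\Ga_i$ — the Riemann–Hurwitz-type relation — to express $g(\pi^{-1}(\Ga_i))$ in terms of $g(\Ga_i)$ and the first Betti number of the dilation subgraph it contains. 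The key numerical input is that $h = g(\tGa)-g(\Ga)$ is itself determined by the dilation data of the whole cover (again via the double-cover genus formula), so substituting everything into the two genus equations forces each $\Ga_i$ into exactly one of the two listed types.

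Concretely, for the forward direction I would argue: if $F$ is an ogod then every $\pi^{-1}(\Ga_i)$ is connected by hypothesis, so no $\Ga_i$ can have $g(\Ga_i)=0$ and be disjoint from $\Ga_{\dil}$. Plugging the connectedness into the upstairs genus count and subtracting the downstairs one, the total "genus deficit" $\sum_i\big(g(\pi^{-1}(\Ga_i)) - 2g(\Ga_i)+1\big)$ must vanish on the nose (this is where the identity $h=g(\tGa)-g(\Ga)$ is used); since each summand is $\le 0$ — a connected double cover of $\Ga_i$ has $g(\pi^{-1}(\Ga_i))\le 2g(\Ga_i)-1$, with equality iff $\Ga_i$ is disjoint from $\Ga_{\dil}$ — every summand is zero, which for the dilated components pins the genus to that of the enclosed dilation subgraph and forces each $\Ga_i$ to contain at most (hence exactly, by a global count of the components of $\Ga_{\dil}$) one component of $\Ga_{\dil}$. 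For the converse, I would simply check that conditions~\eqref{item:1} and~\eqref{item:2} each guarantee connected preimage (the first by the dilated-vertex observation, the second by hypothesis) and that the genus constraints are exactly consistent with $\#F = h$, so that such an $F$ is an ogod; mutual exclusivity is clear since \eqref{item:1} requires a dilated vertex and \eqref{item:2} forbids one.

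I expect the main obstacle to be getting the genus/Euler-characteristic bookkeeping exactly right for the dilated components — in particular correctly handling the contribution of the dilation subgraph components to $g(\tGa)$ and to each $g(\pi^{-1}(\Ga_i))$, since a dilated edge contributes differently from an undilated one and one must be careful that $\pi^{-1}(F)$ really consists of $2h$ edges and that the components of $\Ga_{\dil}$ are distributed among the $\Ga_i$ without being broken (which holds because $F$ contains no dilated edges). Once the two genus identities are written down cleanly, the argument is a short comparison of nonpositive quantities summing to zero; the bulk of the work is organizing that comparison and invoking the double-cover genus formula in the right form.
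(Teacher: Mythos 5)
Your overall strategy (Euler--characteristic bookkeeping on the components of $\Ga\backslash F$ combined with the connectedness criterion for preimages) is in the same spirit as the paper's proof, but the key inequality you lean on is false, and this breaks the argument. You claim that a connected double cover satisfies $g(\pi^{-1}(\Ga_i))\le 2g(\Ga_i)-1$ with equality if and only if $\Ga_i$ is disjoint from $\Ga_{\dil}$. Take $\Ga_i$ to be a single dilated vertex with one undilated loop attached: then $g(\Ga_i)=1$ but $\pi^{-1}(\Ga_i)$ is a wedge of two loops, of genus $2>2\cdot 1-1$. In general, if $\Ga_i$ contains $c_i$ connected components of $\Ga_{\dil}$ of total first Betti number $b_i$, then $g(\pi^{-1}(\Ga_i))=2g(\Ga_i)-1+c_i-b_i$, and the excess $c_i-b_i$ can be positive, zero, or negative. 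So your summands are not nonpositive, and the argument ``nonpositive quantities summing to zero must all vanish'' collapses. (It is also internally inconsistent with your own conclusion: if equality held only for dilation-free components, every component meeting $\Ga_{\dil}$ would contribute strictly negatively, forcing the absurd conclusion that no $\Ga_i$ meets $\Ga_{\dil}$.) A second, structural problem: subtracting twice your downstairs genus equation from the upstairs one produces exactly the global Riemann--Hurwitz relation $h=g(\Ga)-1+d(\tGa/\Ga)-b_1(\Ga_{\dil})$, which holds for \emph{every} $h$-element set $F$ of undilated edges --- the number of components with disconnected preimage cancels out. So the two genus equations, compared as you propose, yield a tautology rather than a constraint on $F$.

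The fix is to work with the single downstairs quantity $\mu_i=g(\Ga_i)-1+c_i-b_1(\Ga_i\cap\Ga_{\dil})$ (in the edge-free case this is the paper's $\widetilde{g}(\Ga_i)=\#E(\Ga_i)-\#\{\mbox{undilated vertices}\}$). One checks $\sum_i\mu_i=0$ using only the downstairs count and the formula for $h$, and $\mu_i\ge -1$ because the cycle space of the subgraph $\Ga_i\cap\Ga_{\dil}$ injects into that of $\Ga_i$, so $b_1(\Ga_i\cap\Ga_{\dil})\le g(\Ga_i)$. Then $\mu_i=-1$ exactly when $\Ga_i$ is a dilation-free tree (disconnected preimage, so $F$ is not an ogod), and $\mu_i=0$ forces either condition~\eqref{item:1} or the genus-one half of condition~\eqref{item:2}; the connected-preimage requirement in~\eqref{item:2} must still be imposed separately, as you note. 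The paper carries this out for edge-free covers and then handles edge dilation by contracting each component of $\Ga_{\dil}$ to a dilated vertex, which gives a rank-preserving bijection of ogods; if you prefer to avoid the contraction step you must at least prove the inequality $b_1(\Ga_i\cap\Ga_{\dil})\le g(\Ga_i)$ and track the genera $b_i$ explicitly, neither of which appears in your sketch.
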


\begin{proof} It is clear that if each $\Ga_i$ is one of the above two types, then $F$ is an ogod. To prove the converse, we first assume that $\pi:\tGa\to \Ga$ is an edge-free cover, in which case the connected components of the dilation subgraph $\Ga_{\dil}$ are the dilated vertices. For a subgraph $\Ga_0\subset \Ga$, possibly disconnected, introduce the quantity
\[
\widetilde{g}(\Ga_0)=\#E(\Ga_0)-\#\{\mbox{undilated vertices of }\Ga_0\}.
\]
If $\Ga_0$ is connected, then
\[
\widetilde{g}(\Ga_0)=g(\Ga_0)-1+\#V(\Ga_0\cap \Ga_{\dil})\geq -1.
\]
Now let $F\subset E(\Ga)$ be an $h$-element set of undilated edges, and let $\Ga\backslash F=\Ga_1\cup\cdots\cup \Ga_k$ be the decomposition into connected components. It is clear that
\[
\widetilde{g}(\Ga\backslash F)=\sum_{i=1}^k \widetilde{g}(\Ga_i).
\]
On the other hand, we observe that
\[
h=g(\tGa)-g(\Ga)=\#E(\tGa)-\#V(\tGa)-(\#E(\Ga)-\#V(\Ga))=\#E(\Ga)-\#\{\mbox{undilated vertices of }\Ga\},
\]
therefore
\[
\widetilde{g}(\Ga\backslash F)=\#E(\tGa)-h-\#\{\mbox{undilated vertices of }\Ga\}=0.
\]
Since each $\widetilde{g}(\Ga_i)\geq -1$, it follows that either $\widetilde{g}(\Ga_i)=-1$ for some $i$ or $\widetilde{g}(\Ga_i)=0$ for all $i$. In the former case $F$ is not an ogod, because the component $\Ga_i$ with $\widetilde{g}(\Ga_i)=-1$ is a tree with no dilated vertices and hence $\pi^{-1}(\Ga_i)$ is disconnected. In the latter case, each $\Ga_i$ is either a tree with a unique dilated vertex, in which case it satisfies property~\eqref{item:1}, or a genus one graph with no dilated vertices, in which case it satisfies property~\eqref{item:2} if and only if it has connected preimage in $\tGa$. This proves the lemma for the edge-free double cover $\pi:\tGa\to \Ga$.

Now let $\pi:\tGa\to\Ga$ be a double cover with edge dilation. We consider the edge-free double cover $\pi':\tGa'\to\Ga'$ obtained by contracting each connected component of the dilation subgraph of $\Ga$ to a separate dilated vertex. The graphs $\Ga$ and $\Ga'$ have the same sets of undilated edges, and it is clear that the ogods of $\Ga$ and $\Ga'$ are in rank-preserving bijection. Now let $F\subset E(\Ga)$ be a set of $h$ undilated edges, let $\Ga\backslash F=\Ga_1\cup\cdots\cup \Ga_k$ be the decomposition into connected components, and let $\Ga'\backslash F=\Ga'_1\cup\cdots\cup \Ga'_k$ be the corresponding decomposition for $\Ga'$. If $\Ga_i$ has no dilation then $\Ga'_i=\Ga_i$, while if $\Ga_i$ has dilation, then it satisfies property~\eqref{item:1} if and only if $\Ga'_i$ is a tree with a unique dilated vertex. Hence each $\Ga_i$ satisfies property~\eqref{item:1} or~\eqref{item:2} if and only if $\Ga'_i$ does, in which case $F$ is an ogod.
\end{proof}

Lemma~\ref{lemma:ogods} shows that the term "odd genus one decomposition" makes sense for edge-free covers, if we view each dilated vertex as having intrinsic genus one. However, the terminology breaks down for covers with edge dilation, since now the genus of $\Ga_i$ is determined by the genus of the corresponding dilation subgraph, which may be arbitrary. For this reason, we henceforth use the term "ogod" instead of "odd genus one decomposition". We also note that ogods of dilated double covers also correspond to bases of an associated matroid (see Remark~\ref{rem:matroids}) on the set of undilated edges of $\Ga$, which we plan to investigate in future work.


We are now ready to state our main result.

\begin{theorem}
\label{thm:main}
Let $\pi:\tGa\to\Ga$ be a dilated double cover of metric graphs. The volume of the tropical Prym variety of $\pi:\tGa\to\Ga$ is given by
\begin{equation}
\Vol^2(\Prym(\tGa/\Ga))=2^{1-d(\tGa/\Ga)}\sum_{F\subset E(\Ga)} 4^{r(F)-1}\prod_{e\in F}\ell(e).
\label{eq:mainformula}
\end{equation}
The sum is taken over all $h$-element ogods of $E(\Ga)$ (see Definition~\ref{def:ogod}), where $h=g(\tGa)-g(\Ga)$ is the dimension of $\Prym(\tGa/\Ga)$ and $r(F)$ is the rank of the ogod, and $d(\tGa/\Ga)$ is the number of connected components of the dilation subgraph $\Ga_{\dil}$.

\end{theorem}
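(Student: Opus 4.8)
The plan is to reduce the dilated case to the free case, apply the known free-cover formula (Theorem~\ref{thm:LZ}) at the end, and keep careful track of how $\Vol^{2}(\Prym(\tGa/\Ga))$ degenerates along the way. First I would contract the dilation subgraph: let $\pi':\tGa'\to\Ga'$ be the contraction of $\pi$ along $\Ga_{\dil}$, so that each connected component of $\Ga_{\dil}$ --- and of its preimage, which maps homeomorphically onto it --- becomes a separate dilated vertex. This leaves $d(\tGa/\Ga)$ unchanged and puts the ogods of $\Ga$ and $\Ga'$ in rank-preserving bijection (their edges are undilated, hence common to $\Ga$ and $\Ga'$), so the right-hand side of~\eqref{eq:mainformula} is unaffected; and any cycle supported inside $\pi^{-1}(\Ga_{\dil})$ is nonzero under $\pi_{*}$ and becomes $2$-torsion in $\co\pi^{*}$, hence contributes to neither lattice defining $\Prym(\tGa/\Ga)$, so $\Prym(\tGa/\Ga)\cong\Prym(\tGa'/\Ga')$ as tppavs. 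We may therefore assume $\pi$ is edge-free, with dilated vertices $v_{1},\dots,v_{d}$.

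Next I would deform $\pi$ into a free cover. At each $v_{i}$, with unique preimage $\tv_{i}$, split $\tv_{i}$ into two vertices $\tv_{i}^{+},\tv_{i}^{-}$, distributing the half-edges at $\tv_{i}$ so that the deck involution exchanges $\tv_{i}^{+}\leftrightarrow\tv_{i}^{-}$, and join them by a bigon exchanged by the involution; downstairs $v_{i}$ acquires a small new loop $\be_{i}$ whose preimage is the connected bigon. The result is a free double cover $\pi_{0}:\tGa_{0}\to\Ga_{0}$ with $g(\Ga_{0})=g(\Ga)+d$ and $g(\tGa_{0})=g(\tGa)+d$, of which $\pi$ is the contraction along $\be_{1}\cup\dots\cup\be_{d}$. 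Since each $\be_{i}$ is an odd loop, the ogods of $\Ga_{0}$ that avoid $\{\be_{1},\dots,\be_{d}\}$ are in rank-preserving bijection with the ogods of $\Ga$ (applying Lemma~\ref{lemma:ogods} to $\Ga_{0}$ and to $\Ga$, with the component carrying $\be_{i}$ matching the component carrying $v_{i}$), while any ogod of $\Ga_{0}$ using some $\be_{i}$ contributes a monomial divisible by $\ell(\be_{i})$. Applying Theorem~\ref{thm:LZ} to $\pi_{0}$ and letting all $\ell(\be_{i})\to 0$ then yields
\[
\lim_{\ell(\be_{i})\to 0}\Vol^{2}(\Prym(\tGa_{0}/\Ga_{0}))=\sum_{F\subset E(\Ga)}4^{r(F)-1}\prod_{e\in F}\ell(e),
\]
the sum over ogods of $\Ga$.

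It remains to compare $\Vol^{2}(\Prym(\tGa/\Ga))$ with this limit, for which I would record, for an arbitrary double cover, the relationship among $\Jac(\tGa)$, $\Jac(\Ga)$ and $\Prym(\tGa/\Ga)$ alluded to in the introduction (Theorem~\ref{thm:B}). Decompose $H_{1}(\tGa,\QQ)=V^{+}\oplus V^{-}$ into eigenspaces of the deck involution: this decomposition is orthogonal for the edge-length pairing, $V^{+}=\pi^{*}H_{1}(\Ga,\QQ)$ with $[\pi^{*}x,\pi^{*}y]_{\tGa}=2[x,y]_{\Ga}$, and $V^{-}$ is the rational span of $\Ker\pi_{*}$, i.e.\ of the Prym lattice. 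Comparing integral structures then expresses $\Vol^{2}(\Jac(\tGa))$ as $\Vol^{2}(\Jac(\Ga))$ times the Grammian of the edge-length pairing on $\Ker\pi_{*}$ --- which is exactly $\Vol^{2}$ of the Prym with its \emph{induced} polarization --- times explicit powers of $2$ coming from the lattice indices $[H_{1}(\tGa,\ZZ):(H_1(\tGa,\ZZ)\cap V^{+})\oplus(H_1(\tGa,\ZZ)\cap V^{-})]$ and $[H_1(\tGa,\ZZ)\cap V^{+}:\pi^{*}H_{1}(\Ga,\ZZ)]$ and, via Proposition~\ref{prop:pp}, from the type of the principal polarization of the Prym. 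Feeding in Kirchhoff's theorem (Theorem~\ref{thm:ABKS}) for $\Jac(\Ga)$ and $\Jac(\tGa)$, and running the same computation for $\pi_{0}$, the spanning-tree sums for $\tGa,\Ga$ match those for $\tGa_{0},\Ga_{0}$ in the limit, leaving only the single power of $2$ relating $\Vol^{2}(\Prym(\tGa/\Ga))$ to the displayed limit; this power is $2^{1-d}$, which gives~\eqref{eq:mainformula}.

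The main obstacle is precisely pinning down that power of $2$. The real tori and their lattices degenerate predictably, but the normalization of the principal polarization on $(\Ker\Nm)_{0}$ --- its type relative to the polarization restricted from $\Jac(\tGa)$ --- shifts by a factor of $2$ each time a component is added to or removed from the dilation subgraph: the bigon cycles introduced in the deformation span directions of the Prym lattice of $\pi_{0}$ that, in $\tGa$, degenerate to \emph{twice} primitive classes of $\Ker\pi_{*}$. Accounting for this carefully over all $d$ components, together with the $d$-dependence of the lattice indices above, is what produces the coefficient $2^{1-d}$ --- and it is the same mechanism that makes $\Prym(\tGa/\Ga)$ behave discontinuously in moduli.
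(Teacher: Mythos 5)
Your overall architecture matches the paper's: reduce to the free case by resolving the dilation, invoke Theorem~\ref{thm:LZ} there, and mediate between the two sides through the quotient $\Vol^2(\Jac(\tGa))/\Vol^2(\Jac(\Ga))$ via Kirchhoff's theorem. The paper organizes this as two independent statements --- a purely combinatorial identity $J(\tGa)=2^{1-m_d+n_d-2d}\Pr(\tGa/\Ga)J(\Ga)$ proved by induction on dilated vertices and edges (Theorem~\ref{thm:A}), and the volume relation $\Vol^2(\Prym(\tGa/\Ga))=2^{m_d-n_d+d}\Vol^2(\Jac(\tGa))/\Vol^2(\Jac(\Ga))$ (Theorem~\ref{thm:B}) --- whereas you take a one-shot limit $\ell(\be_i)\to 0$ after resolving all dilated vertices at once. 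That difference is cosmetic; your loop-resolution is exactly the move of Figure~\ref{fig:addloop}, and the limit bookkeeping is equivalent to the induction.

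The genuine gap is that the quantitative heart of the theorem --- the exponent of $2$ --- is asserted rather than derived. Your third and fourth paragraphs correctly name every ingredient (the eigenspace decomposition of $H_1(\tGa,\QQ)$, the factor $[\pi^*x,\pi^*y]_{\tGa}=2[x,y]_{\Ga}$, the two lattice indices, and the polarization type from Proposition~\ref{prop:pp}), but none of these is evaluated: you never compute the index $[H_1(\tGa,\ZZ):(H_1\cap V^+)\oplus(H_1\cap V^-)]$, never determine how many $1$'s and $2$'s occur in the type of $\xi|_P$ in terms of the cover, and then conclude with ``accounting for this carefully \ldots\ is what produces the coefficient $2^{1-d}$.'' That sentence is the theorem, not a proof of it. In the paper this is exactly the content of Theorem~\ref{thm:B}, and it cannot be done without the explicit basis of Proposition~\ref{prop:dilatedbasis} (the invariants $A=g(\Ga)-m_d+n_d-d$, $B=d-1$, $C=m_d-n_d+d$ enter separately, and the final answer $2^{-C}$ results from a genuine cancellation $2^{-2A-2C}\cdot 2^{A+C}\cdot 2^{A}$). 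Your heuristic for the degeneration is also not quite right as stated: the bigon cycle $\tilde b_1-\tilde b_2$ created at a resolved vertex is \emph{not} in $\Ker\pi_{0*}$ (it pushes forward to twice the new loop), so it is not a ``direction of the Prym lattice of $\pi_0$'' that degenerates to twice a primitive class; the actual primitive generators of $\Ker\pi_*$ near a dilated vertex are of the form $\tal^+-\tal^-$, and tracking them is precisely what Proposition~\ref{prop:dilatedbasis} accomplishes. Finally, your opening reduction (contracting $\Ga_{\dil}$ leaves $\Prym(\tGa/\Ga)$ unchanged as a tppav) is true but needs one more line than you give: you must also check that no element of $K'=\Ker\pi_*$ has support on a dilated edge (this follows because such elements are anti-invariant and dilated edges are fixed by the involution), since otherwise the edge-length pairing on $K'$ could depend on the contracted lengths.
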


We first consider several examples. 

\begin{example} Let $\pi:\tGa\to\Ga$ be a double cover such that every vertex of $\Ga$ is dilated and no edge of $\Ga$ is dilated. In this case $g(\tGa)=2\#E(\Ga)-\#V(\Ga)+1$ so $h=\#E(\Ga)$, and the only ogod of the double cover $\pi:\tGa\to\Ga$ is all of $F=E(\Ga)$, with $r(F)=\#V(\Ga)$. Since $d(\tGa/\Ga)=\#V(\Ga)$, we see that the volume of the tropical Prym variety of $\pi:\tGa\to\Ga$ is equal to
\[
\Vol^2(\Prym(\tGa/\Ga))=2^{\#V(\Ga)-1}\prod_{e\in E(\Ga)}\ell(e).
\]

\end{example}

\begin{example} We consider the double cover $\pi:\tGa\to \Ga$ shown on Figure~\ref{fig:example1}. We have $g(\tGa)=6$ and $g(\Ga)=3$, so ogods are three-element subsets of the set $\{e_1,e_2,e_3,e_4,e_5\}$ of undilated edges. An ogod cannot contain both $e_1$ and $e_2$, since the left undilated vertex will then have disconnected preimage, and similarly with $e_4$ and $e_5$. This leaves a total of four ogods of the following ranks:
\[
r(\{e_1,e_3,e_4\})=3,\quad r(\{e_1,e_3,e_5\})=2,\quad r(\{e_2,e_3,e_4\})=4,\quad r(\{e_2,e_3,e_5\})=3.
\]
The dilation subgraph has $d(\tGa/\Ga)=2$ connected components, therefore 
\[
\Vol^2(\Prym(\tGa/\Ga))=8x_1x_3x_4+2x_1x_3x_5+32x_2x_3x_4+8x_2x_3x_5,\quad x_i=\ell(e_i).
\]
\end{example}

\begin{example} \label{ex:disc} Equation~\eqref{eq:mainformula} shows that the Prym variety of a dilated double cover behaves discontinuously under edge contractions that change the number $d(\tGa/\Ga)$ of connected components of the dilation subgraph. Indeed, consider the double cover $\pi:\tGa\to \Ga$ shown on the left hand side of Figure~\ref{fig:disc}. The double cover $\pi$ has two ogods, $\{e\}$ and $\{f\}$, with $r(\{e\})=2$ and $r(\{f\})=1$. The left vertex is dilated, so $d(\tGa/\Ga)=1$ and
\[
\Vol^2(\Prym(\tGa/\Ga))=4\ell(e)+\ell(f).
\]
The double cover $\pi':\tGa'\to \Ga'$ on the right hand side is obtained from $\pi$ by contracting the loop $f$, creating a second dilated vertex. The edge $e$ is the unique ogod and $r(\{e\})=2$, but now $d(\tGa'/\Ga')=2$. Hence
\[
\Vol^2(\Prym(\tGa'/\Ga'))=2\ell(e),
\]
which is not the limit of $\Vol^2(\Prym(\tGa/\Ga))$ as $\ell(f)\to 0$, as one may expect. This is problematic from a moduli-theoretic viewpoint, and suggests that the original definition of the Prym variety of a dilated double cover should be revisited.

We note that this phenomenon does not occur when deforming from a dilated double cover with connected dilation subgraph to a free double cover, since Equations~\eqref{eq:LZ} and~\eqref{eq:mainformula} agree when $d(\tGa/\Ga)=1$.

\begin{figure}
    \centering
\begin{tikzcd}
\begin{tikzpicture}

\draw[fill](0,0) circle(1mm);
\draw[thick] (0,0) -- (2,1);
\draw[thick] (0,0) -- (2,-1);
\draw[fill](2,1) circle(0.7mm);
\draw[fill](2,-1) circle(0.7mm);
\draw[thick] (2,1) .. controls (1.7,0.5) and (1.7,-0.5) .. (2,-1);
\draw[thick] (2,1) .. controls (2.3,0.5) and (2.3,-0.5) .. (2,-1);
\path[->] (1,-1) edge node[right]{$\pi$} (1,-2);

\begin{scope}[yshift=-3cm]

\draw[fill](0,0) circle(1mm);
\draw[thick] (0,0) -- (2,0);
\node[above] at (1,0) {$e$};
\node[right] at (3,0) {$f$};
\draw[fill](2,0) circle(0.7mm);
\draw[thick] (2.5,0) circle(5mm);

\end{scope}

\begin{scope}[xshift=5cm]
\draw[fill](0,0) circle(1mm);
\draw[thick] (0,0) .. controls (0.3,0.5) and (1.7,0.5) .. (2,0);
\draw[thick] (0,0) .. controls (0.3,-0.5) and (1.7,-0.5) .. (2,0);
\draw[fill](2,0) circle(1mm);
\path[->] (1,-1) edge node[right]{$\pi'$} (1,-2);
\end{scope}

\begin{scope}[xshift=5cm,yshift=-3cm]
\draw[fill](0,0) circle(1mm);
\draw[thick] (0,0) -- (2,0);
\draw[fill](2,0) circle(1mm);
\node[above] at (1,0) {$e$};

\end{scope}

\end{tikzpicture}
\end{tikzcd}
    \caption{Discontinuity of the Prym variety under edge contraction}
    \label{fig:disc}
\end{figure}
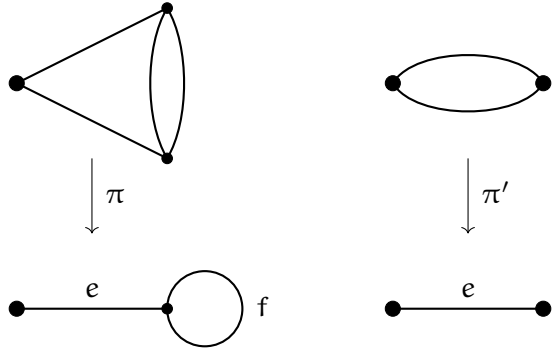

\end{example}

The proof of Theorem~\ref{thm:main} consists of two distinct parts. On one hand, we show that the polynomial on the right-hand side of Equation~\eqref{eq:mainformula} can be expressed in terms of the spanning trees of $\tGa$ and $\Ga$. The idea is to deform a dilated double cover to a free double cover by a series of edge contractions and de-contractions, and use Theorem~\ref{thm:LZ} as the base case. This part of the proof is purely graph-theoretic, and the main result is Theorem~\ref{thm:A}. On the other hand, we independently compute the relationship between the volumes of the tppavs $\Jac(\tGa)$, $\Jac(\Ga)$, and $\Prym(\tGa/\Ga)$ for a dilated double cover $\pi:\tGa\to \Ga$, by studying the action of the pushforward and pullback maps on the homology groups $H_1(\tGa,\ZZ)$ and $H_1(\Ga,\ZZ)$; the main result is Theorem~\ref{thm:B}. These homology calculations have appeared in~\cite{rohrle2022tropical}, sharpening and correcting the results of~\cite{MR4261102}, and we briefly reproduce them here.

\subsection{The volume polynomials} It is convenient to separate the right hand sides of Equations~\eqref{eq:ABKS} and~\eqref{eq:mainformula} into stand-alone definitions:

\begin{definition} Let $\Ga$ be a metric graph of genus $g$. The \emph{Jacobian polynomial} $J(\Ga)$ of $\Ga$ is the degree $g$ homogeneous polynomial in the edge lengths of $\Ga$ given by
\[
J(\Ga)=\sum_{C\subset E(\Ga)}\prod_{e\in C} \ell(e),
\]
where the sum is taken over all $g$-element subsets $C\subset E(\Ga)$ such that $\Ga\backslash C$ is a tree.
 \label{def:jp}   
\end{definition}

The following contraction-deletion formula for the Jacobian polynomial is elementary to verify, and we omit the proof.

\begin{lemma} Let $\Ga$ be a metric graph and let $e\in E(\Ga)$ be an edge of length $\ell(e)$. Let $\Ga_e$ and $\Ga^e$ be the graphs obtained by contracting and removing $e$, respectively. The Jacobian polynomial of $\Ga$ is expressed in terms of the Jacobian polynomials of $\Ga_e$ and $\Ga^e$ as follows:
\[
J(\Ga)=\begin{cases}
    \ell(e) J(\Ga_e), & e\mbox{ is a loop,} \\
    J(\Ga_e), & e\mbox{ is a bridge,} \\
    J(\Ga_e)+\ell(e)J(\Ga^e), & \mbox{otherwise}.
\end{cases}
\]
\label{lem:CD}
\end{lemma}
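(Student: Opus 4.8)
The plan is to expand $J(\Ga)$ as a sum over the $g$-element subsets $C\subset E(\Ga)$ for which $\Ga\backslash C$ is a tree --- equivalently, over complements of spanning trees --- and to split this sum according to whether the fixed edge $e$ lies in $C$ or not, matching each of the two resulting partial sums with $J(\Ga_e)$ or $\ell(e)J(\Ga^e)$. Three elementary facts are used repeatedly: (i) the observation recorded after Theorem~\ref{thm:ABKS}, that for $\#C=g(\Ga)$ the graph $\Ga\backslash C$ is a tree if and only if it is connected; (ii) a loop lies in no spanning tree, hence in every complement of a spanning tree, while a bridge lies in every spanning tree, hence in no complement of one; and (iii) contracting an edge does not change the number of connected components of a graph, so that for $e\notin C$ the graph $\Ga\backslash C$ is connected if and only if $\Ga_e\backslash C$ is.

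First I treat the generic case, where $e$ is neither a loop nor a bridge. Here contraction reduces both $\#E$ and $\#V$ by one, so $g(\Ga_e)=g(\Ga)$, while deletion leaves a connected graph with $g(\Ga^e)=g(\Ga)-1$. Splitting the sum defining $J(\Ga)$ according to whether $e\notin C$ or $e\in C$: the terms with $e\notin C$ range precisely over $C\subset E(\Ga_e)$ with $\#C=g(\Ga_e)$, and by fact (iii) $\Ga\backslash C$ is connected if and only if $\Ga_e\backslash C$ is, so this partial sum equals $J(\Ga_e)$; the terms with $e\in C$ can be written with $C=\{e\}\sqcup C'$, where $C'\subset E(\Ga^e)$ and $\#C'=g(\Ga^e)$, and since $\Ga\backslash C=\Ga^e\backslash C'$ this partial sum equals $\ell(e)J(\Ga^e)$. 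Adding the two gives the third case of the lemma.

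The two degenerate cases follow from the same bookkeeping once one observes which partial sum is empty. If $e$ is a bridge, then by fact (ii) no complement of a spanning tree contains $e$, so only the $e\notin C$ terms survive; contracting a bridge again preserves the genus and, by fact (iii), connectedness, so $J(\Ga)=J(\Ga_e)$. If $e$ is a loop, then by fact (ii) every complement of a spanning tree contains $e$, so only the $e\in C$ terms survive; writing $C=\{e\}\sqcup C'$ as before and using that deleting a loop drops the genus by one (so $\#C'=g(\Ga^e)$) and that for a loop deletion and contraction coincide, i.e.\ $\Ga^e=\Ga_e$, we obtain $J(\Ga)=\ell(e)J(\Ga^e)=\ell(e)J(\Ga_e)$.

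No step is a genuine obstacle; the only point requiring care is the genus bookkeeping --- tracking how $\#E$, $\#V$, and the number of connected components behave under deletion and contraction of $e$ in each case --- so that the index set of each partial sum really coincides with the index set of $J(\Ga_e)$ or of $J(\Ga^e)$. It is also worth fixing at the outset the convention that contracting a loop means deleting it, which is exactly what makes the loop case of the formula consistent with the others.
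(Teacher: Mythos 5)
Your proof is correct. The paper actually omits the proof of Lemma~\ref{lem:CD} entirely, declaring it ``elementary to verify,'' and your argument --- splitting the sum over complements of spanning trees according to whether $e\in C$, together with the genus bookkeeping under deletion and contraction and the convention that contracting a loop deletes it --- is exactly the standard verification the authors have in mind. Nothing is missing; the one point you rightly flag (that for $\#C=g$ the complement is a tree iff it is connected, so contraction's preservation of connectedness suffices) is the same observation the paper records immediately after Theorem~\ref{thm:ABKS}.
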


\begin{definition} Let $\pi:\tGa\to \Ga$ be a double cover of metric graphs, and let $h=g(\tGa)-g(\Ga)$. The \emph{Prym polynomial} $\Pr(\tGa/\Ga)$ of $\pi:\tGa\to \Ga$ is the degree $h$ homogeneous polynomial in the edge lengths of $\Ga$ given by
\begin{equation}
    \Pr(\tGa/\Ga)=\sum_{F\subset E(\Ga)}4^{r(F)-1}\prod_{e\in F}\ell(e),
\label{eq:prympolynomial}
\end{equation}
where the sum is taken over all ogods (see Definition~\ref{def:ogod}) and $r(F)$ is the rank of the ogod.
\label{def:prympolynomial}
\end{definition}

We now determine the relationship between the three polynomials associated with a harmonic double cover.

\begin{theorem}
\label{thm:A}
\noindent Let $\pi:\tGa\to \Ga$ be a double cover of metric graphs. Then
\begin{equation}
    J(\tGa)=2^{1-m_d(\tGa/\Ga)+n_d(\tGa/\Ga)-2d(\tGa/\Ga)}\Pr (\tGa/\Ga)J(\Ga).
      \label{eq:polynomials3}
\end{equation}
where $m_d(\tGa/\Ga)$, $n_d(\tGa/\Ga)$, and $d(\tGa/\Ga)$ denote respectively the number of edges, vertices, and connected components of the dilation subgraph $\Ga_{\dil}$. 

\end{theorem}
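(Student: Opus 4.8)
The plan is to prove~\eqref{eq:polynomials3} by induction, deforming an arbitrary double cover to a free one through a sequence of edge contractions and deletions (and, where convenient, subdivisions). The base case is a free cover $\pi\colon\tGa\to\Ga$: then $m_d=n_d=d=0$, the exponent is $1$, and~\eqref{eq:polynomials3} reads $J(\tGa)=2\,J(\Ga)\,\Pr(\tGa/\Ga)$, which is the combinatorial identity underlying the proof of Theorem~\ref{thm:LZ} in~\cite{MR4382460} and may be taken as known. The engine of the induction is Lemma~\ref{lem:CD} together with two companion facts to be recorded first: a contraction--deletion rule for $\Pr(\tGa/\Ga)$, and the elementary transformation rules for $(m_d,n_d,d)$---hence for the exponent $1-m_d+n_d-2d$---under contraction and deletion of an edge of $\Ga$.

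It is convenient to first reduce to edge-free covers. Writing $g_{\dil}=m_d-n_d+d$ for the total genus of $\Ga_{\dil}$, the exponent splits as $1-m_d+n_d-2d=(1-d)-g_{\dil}$. Let $\pi'\colon\tGa'\to\Ga'$ be the contraction of $\pi$ along $\Ga_{\dil}$: it is edge-free with $d(\tGa'/\Ga')=d$ isolated dilated vertices, and by Lemma~\ref{lemma:ogods} it has the same ogods and ranks as $\pi$, so $\Pr(\tGa'/\Ga')=\Pr(\tGa/\Ga)$. Comparing exponents, the instance of~\eqref{eq:polynomials3} for $\pi$ becomes equivalent to the instance for $\pi'$ together with the purely graph-theoretic identity
\[
2^{g_{\dil}}\,J(\tGa)\,J(\Ga')=J(\tGa')\,J(\Ga),
\]
which reflects that $\pi$ restricts to a graph isomorphism $\pi^{-1}(\Ga_{\dil})\xrightarrow{\ \sim\ }\Ga_{\dil}$ with all edge lengths halved. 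I would prove this identity by induction on $\#E(\Ga_{\dil})$, contracting one dilated edge $e$ at a time: a dilated loop multiplies $J(\Ga)$ by $\ell(e)$ and $J(\tGa)$ by $\ell(e)/2$ and drops $g_{\dil}$ by one; a dilated bridge changes nothing; and a dilated edge that is neither is handled by Lemma~\ref{lem:CD}, using that the unique preimage of $e$ in $\tGa$ is again neither a loop nor a bridge.

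With $\pi$ edge-free, it remains to remove its dilated vertices one at a time, reducing~\eqref{eq:polynomials3} to the free case by induction on $\#E(\Ga)$; the base case is a single dilated vertex carrying a bouquet of loops, where~\eqref{eq:mainformula} is immediate. Here I would apply contraction--deletion to an undilated edge incident to a chosen dilated vertex, keeping track of how $(m_d,n_d,d)$, the set of ogods with their ranks, and the Jacobian polynomials of the resulting double covers change, and verifying that the recursion satisfied by $J(\tGa)$ agrees with the one forced by the recursions for $J(\Ga)$ and $\Pr(\tGa/\Ga)$.

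The main obstacle throughout is the bookkeeping in the deletion steps. Deleting an edge can disconnect a component of $\Ga\setminus F$, change whether a component has connected preimage, alter a rank $r(F)$, or split a component of $\Ga_{\dil}$ and so change $d$; correspondingly both the indexing set of ogods in~\eqref{eq:prympolynomial} and the exponent $1-m_d+n_d-2d$ jump. The crux of the proof is to check that these jumps cancel exactly, so that the contraction--deletion recursion produced by the left-hand side of~\eqref{eq:polynomials3} matches the one produced by the right-hand side; it is this cancellation that pins down both the weight $4^{r(F)-1}$ in the Prym polynomial and the precise exponent $2^{1-m_d+n_d-2d}$.
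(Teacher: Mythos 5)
Your overall strategy---induction via contraction--deletion, with the free case from \cite{MR4382460} as the known input---is the same as the paper's, but both of your inductive mechanisms have genuine gaps. First, your reduction to edge-free covers rests on the identity $2^{g_{\dil}}J(\tGa)J(\Ga')=J(\tGa')J(\Ga)$, which you propose to prove by contracting one dilated edge at a time. That induction does not close when the dilated edge $e$ is a bridge of $\Ga_{\dil}$ but not of $\Ga$: the deletion term of Lemma~\ref{lem:CD} then involves the cover $\pi^e:\tGa^{\te}\to\Ga^e$, whose dilation subgraph has one more connected component, so its contraction $(\Ga^e)'$ is \emph{not} $\Ga'$ (a contracted vertex splits in two), and the inductive hypothesis applied to $\pi^e$ relates the wrong pair of graphs. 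Concretely, for $\Ga$ consisting of two dilated vertices joined by a dilated edge $e$ and an undilated edge $f$, the deletion term requires $J(\tGa^{\te})J(\Ga')=2\,J(\tGa')J(\Ga^e)$, and that factor of $2$ is invisible to your bookkeeping; in the paper it is absorbed by the rank shift of every ogod (subcase (3)(b) of the proof, where $\Pr(\tGa/\Ga)=\Pr(\tGa^{\te}/\Ga^e)/4$ compensates $d\mapsto d+1$ in the exponent)---a compensation you have forfeited by factoring $\Pr$ out before this step. Your parenthetical justification that the identity ``reflects'' the isomorphism $\pi^{-1}(\Ga_{\dil})\simeq\Ga_{\dil}$ is not a proof, since $J$ does not factor through contraction of a subgraph.

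Second, and more seriously, your edge-free-to-free step has no mechanism for removing a dilated vertex. Contraction or deletion of an undilated edge never undilates a vertex, so an induction on $\#E(\Ga)$ by contraction--deletion on undilated edges cannot terminate at a free cover; moreover, contracting an undilated edge joining two dilated vertices is not an operation covered by Lemma~\ref{lem:CD} at the level of $\tGa$, since its preimage is a connected genus-one subgraph whose collapse changes $h=g(\tGa)-g(\Ga)$. The paper's key idea here, which your proposal is missing, goes in the opposite direction: it \emph{resolves} a dilated vertex $v$ by attaching a loop $e$ at $v$ and splitting $\tv$ into two vertices joined by two edges, producing a cover $\pi'$ with one fewer dilated vertex, and then extracts the coefficient of $\ell(e)$ from the identities $J(\Ga')=\ell(e)J(\Ga)$, $J(\tGa')=2\ell(e)J(\tGa)+\ell(e)^2J(\tGa'_0)$, and $\Pr(\tGa'/\Ga')=\Pr(\tGa/\Ga)+\ell(e)Q$. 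This maneuver avoids ever deleting an undilated edge, which is precisely the step whose effect on the ogods and their ranks you identify as ``the crux'' but do not carry out.
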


We note that for an edge $\te\in E(\tGa)$ we have 
\[
\ell(\te)=\begin{cases}\ell(\pi(\te)), & \pi(\te)\mbox{ is undilated,}\\
\ell(\pi(\te))/2, & \pi(\te)\mbox{ is dilated},
\end{cases}
\]
so we may indeed view $J(\tGa)$ as a polynomial in the edge lengths of $\Ga$. Of course, it is not a priori clear why $J(\tGa)$ should be divisible by $J(\Ga)$.

\begin{proof}[Proof of Theorem~\ref{thm:A}] We prove this result first for free double covers, then for edge-free double covers, and finally for double covers with edge dilation.

\medskip \noindent {\bf Free double covers.} The proof for a free double cover $\pi:\tGa\to \Ga$ follows directly from the results of~\cite{MR4382460}. Indeed, according to Equation~\eqref{eq:ABKS} we have
\[
\Vol^2(\Jac(\tGa))=J(\tGa),\quad \Vol^2(\Jac(\Ga))=J(\Ga).
\]
On the other hand, Equation~\eqref{eq:LZ} states that
\[
\Vol^2(\Prym(\tGa/\Ga))=\Pr(\tGa/\Ga),
\]
and the relationship between the three volumes is given by Proposition 3.6 of~\cite{MR4382460}:
\begin{equation}
\Vol^2(\Jac(\tGa))=2\Vol^2(\Prym(\tGa/\Ga))\Vol^2(\Jac(\Ga)).
\label{eq:freevolume}
\end{equation}
It follows that
\[
J(\tGa)=2\Pr(\tGa/\Ga)J(\Ga),
\]
which is Equation~\eqref{eq:polynomials3} for a free double cover, for which $m_d(\tGa/\Ga)=n_d(\tGa/\Ga)=d(\tGa/\Ga)=0$.

\begin{figure}
		\begin{tikzcd}
		\begin{tikzpicture}
		\draw[fill](0,0) circle(1mm) node[above=2pt]{$\tv$};
		
		\draw [thick] (0,0) -- (1.0,-0.5);
		\draw [thick] (0,0) -- (1,0.5);
		\draw [thick] (0,0) --  (-1,0.5);
              \draw [thick] (0,0) --  (-0.5,0.7);
		\draw [thick] (0,0) -- (-1,-0.5);      
                \draw[thick] (0,0) --( -0.5, -0.7);   
		\end{tikzpicture}
\arrow{d}{\bf{{\pi}}} 
		&
			\begin{tikzpicture}
		\draw[fill](0,0) circle(0.7mm) node[below=2pt] {$\tv^{-}$};
           \draw[fill](0,2) circle(0.7mm) node[above=2pt]{$\tv^{+}$};
           \draw [thick] (0,0) .. controls (0.4,1) and (0.4,1) .. (0,2);
      \draw [thick] (0,0) .. controls (-0.4,1) and (-0.4,1) .. (0,2);

            \draw [thick] (0,2) -- (1,2);
		\draw [thick] (0,2) --  (-1,2.5);
	      \draw [thick] (0,2) --(-1,1.5);
  
		\draw [thick] (0,0) -- (1.0,0.0);
		\draw [thick] (0,0) --  (-1,0.5);
		\draw [thick] (0,0) --(-1,-0.5);
            \node[right] at (0.2,1) {$\te^+$};
            \node[left] at (-0.2,1) {$\te^-$};

		\end{tikzpicture}
		\arrow{d}{\bf{{\pi'}}} 
		\\
		\begin{tikzpicture}
		\draw[fill](0,0) circle(1mm) node[below=2pt]{$v$};
		\draw [thick] (0,0) edge node[above] {} (1.0,0);
		\draw [thick] (0,0) edge node[above] {}(-1,0.5);
		\draw [thick] (0,0) edge node[below] {} (-1,-0.5);
		\end{tikzpicture}
	&
			\begin{tikzpicture}
		\draw[fill](0,0) circle(0.7mm) node[below=2pt]{$v$};
            \draw [thick] (0,0) .. controls (-0.5,0.3) and (-0.6,1) .. (0,1);
		\draw [thick] (0,0) .. controls (0.5,0.3) and (0.6,1) .. (0,1);
		\draw [thick] (0,0) edge node[above] {} (1.0,0);
		\draw [thick] (0,0) edge node[above] {}(-1,0.5);
		\draw [thick] (0,0) edge node[below] {} (-1,-0.5);
            \node[right] at (0.3,0.6) {$e$};
		\end{tikzpicture}
		\end{tikzcd}
		\\
  \caption{Resolving a dilated vertex by adding a loop.}
\label{fig:addloop}
\end{figure}
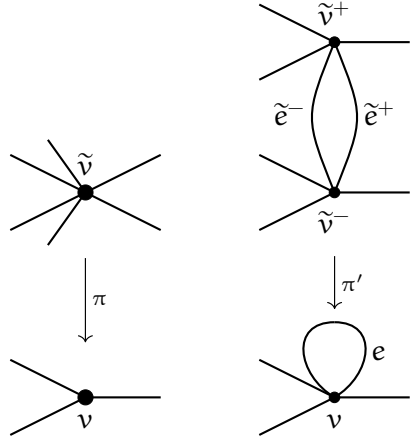

\medskip \noindent {\bf Edge-free double covers.} We now prove the theorem for edge-free double covers by induction on the number of dilated vertices. For such a cover $\pi:\tGa\to \Ga$, we have $n_d(\tGa/\Ga)=d(\tGa/\Ga)$ and $m_d(\tGa/\Ga)=0$. Consider an edge-free cover $\pi:\tGa\to \Ga$ with a dilated vertex $v\in V(\Ga)$, and let $\tv=\pi^{-1}(v)$. We consider the double cover $\pi':\tGa'\to \Ga'$ obtained by resolving the dilated vertex $v$ into an undilated vertex by a loop attachment (see Figure~\ref{fig:addloop}). Specifically, $\Ga'$ consists of $\Ga$ with a loop $e$ of length $\ell(e)$ attached to $v$, while $\tGa'$ consists of $\tGa$ with the vertex $\tv$ replaced by a pair of vertices $\tv^{\pm}$ connected by two edges $\te^{\pm}$. For each edge $f\in E(\Ga)$ rooted at $v$ there are two edges $\tf^{\pm}\in E(\tGa)$ rooted at $\tv$, and we root one at each of the $\tv^{\pm}\in V(\tGa')$ arbitrarily. The map $\pi'$ sends $\tv^{\pm}$ to $v$, $\te^{\pm}$ to $e$, and is equal to $\pi$ on the rest of $\tGa'$. The vertex $v$ on the resulting harmonic double cover $\pi':\tGa'\to\Ga'$ is now undilated, hence
\begin{equation}
n_d(\tGa/\Ga)=d(\tGa/\Ga)=n_d(\tGa'/\Ga')+1=d(\tGa'/\Ga')+1,\quad
m_d(\tGa/\Ga)=m_d(\tGa'/\Ga')=0,
\label{eq:contr1}
\end{equation}
and we assume by induction that Equation~\eqref{eq:polynomials3} holds for $\pi':\tGa'\to \Ga'$.

We now compare the Jacobian and Prym polynomials of the two covers $\pi':\tGa'\to \Ga'$ and $\pi:\tGa\to \Ga$. First, we note that $g(\Ga')=g(\Ga)+1$. Since $e$ is a loop, it lies in the complement of each spanning tree, and therefore
\begin{equation}
J(\Ga')=\ell(e)J(\Ga).
\label{eq:contr2}
\end{equation}

Similarly, we have $g(\tGa')=g(\tGa)+1$, and we evaluate $J(\tGa')$ in terms of $J(\tGa)$. We classify complements of spanning trees $C'\subset E(\tGa')$ according to whether they contain the edges $\te^{\pm}$:
\begin{enumerate}
    \item $\te^{\pm}\in C'$. There is a unique path from $\tv^+$ to $\tv^-$ along the spanning tree $\tGa'\backslash C'$ of $\tGa'$, which corresponds to a closed loop in $\tGa$ starting and ending at $\tv$. Hence it follows that the corresponding subset $C=C'\backslash \{\te^{\pm}\}\subset E(\tGa)$ is not the complement of a spanning tree of $\tGa$. Instead, $C$ is the complement of a unique spanning tree in the graph $\tGa'_0$ obtained from $\tGa'$ by deleting the edges $\te^{\pm}$, and every such complement is obtained in this way.

    \item $\te^+\in C'$ and $\te^-\notin C'$. The spanning tree $T'=\tGa'\backslash C'$ contains the edge $\te^-$, and $T=T'\backslash \{\te^-\}$ is a spanning tree of $\tGa$ with complementary edge set $C=C'\backslash \{\te^+\}\subset E(\tGa)$. Conversely, if $C\subset E(\tGa)$ is the complement of a spanning tree $T=\tGa\backslash C$ of $\tGa$, then $C'=C\cup \te^+\subset E(\tGa')$ is the complement of a spanning tree $T'=T\cup \{\te^-\}$ of $\tGa'$.

    \item $\te^-\in C'$ and $\te^+\notin C'$. This case is symmetric to the one above: $C=C'\backslash \{\te^-\}$ is the complement of a spanning tree of $\tGa$, and every such complement is obtained in this way.

    \item $\te^{\pm}\notin C'$. This is not possible, since a spanning tree of $\tGa'$ may not contain both edges $\te^{\pm}$. 
    
\end{enumerate}
Expressing the sum that defines $J(\tGa')$ according to these four types, we see that
\begin{equation}
J(\tGa')=2\ell(e)J(\tGa)+\ell(e)^2 J(\tGa'_0).
\label{eq:contr3}
\end{equation}

Finally, we compare $\Pr(\tGa'/\Ga')$ and $\Pr(\tGa/\Ga)$. Since $g(\tGa')-g(\Ga')=g(\tGa)-g(\Ga)$, these polynomials have the same degree. Let $F\subset E(\Ga)$ be an ogod of the double cover $\pi:\tGa\to \Ga$. The connected component of $\Ga\backslash F$ containing $v$ has connected pre-image in $\tGa$ because $v$ is dilated, and this remains true when we undilate $v$ and replace $\tv$ with $\tv^{\pm}$. Hence $F\subset E(\Ga')$ is also an ogod of the double cover $\pi':\tGa'\to \Ga$ of the same rank. Conversely, if $F'\subset E(\Ga')$ is an ogod of the double cover $\pi':\tGa'\to \Ga$ and $e\notin F'$, then $F'\subset E(\Ga)$, and the connected component of $\Ga'\backslash F'$ containing $e$ (and having connected pre-image in $\tGa'$) corresponds to a connected component of $\Ga\backslash F'$ containing $v$ (which necessarily has connected pre-image). It follows that there is a rank-preserving bijection between the ogods $F\subset E(\Ga)$ of the double cover $\pi:\tGa\to \Ga$ and the ogods $F'\subset E(\Ga')$ of the double cover $\pi':\tGa'\to \Ga'$ not containing $e$, and therefore
\begin{equation}
\Pr(\tGa'/\Ga')=\Pr(\tGa/\Ga)+\ell(e)Q,
\label{eq:contr4}
\end{equation}
where the term $Q$ is irrelevant to us.

We now put everything together. By induction, Equation~\eqref{eq:polynomials3} holds for the double cover $\pi':\tGa'\to \Ga'$. Plugging in Equations~\eqref{eq:contr2},~\eqref{eq:contr3}, ~\eqref{eq:contr4}, taking the linear in $\ell(e)$ term, and using~\eqref{eq:contr1}, we see that Equation~\eqref{eq:polynomials3} holds for the double cover $\pi:\tGa\to \Ga$.

\medskip \noindent {\bf Double covers with edge dilation.} Finally, we prove the theorem for arbitrary dilated double covers by induction on the number of dilated edges, the base case being that of edge-free double covers. Let $\pi:\tGa\to \Ga$ be a dilated double cover with a dilated edge $e\in E(\Ga)$ of length $\ell(e)$, and let $\te=\pi^{-1}(e)$ be its preimage of length $\ell(\te)=\ell(e)/2$. We contract the edges $e$ and $\te$ to obtain a dilated double cover $\pi_e:\tGa_{\te}\to \Ga_e$ with
\begin{equation}
m_d(\tGa_{\te}/\Ga_e)=m_d(\tGa/\Ga)-1,\quad
d(\tGa_{\te}/\Ga_e)=d(\tGa/\Ga),
\label{eq:decontr1}
\end{equation}
and we assume by induction that Equation~\eqref{eq:polynomials3} holds for $\pi_e:\tGa_{\te}\to \Ga_e$.

It is clear that $g(\tGa_{\te})-g(\Ga_e)=g(\tGa)-g(\Ga)$, so $\Pr(\tGa_{\te}/\Ga_e)$ and $\Pr(\tGa/\Ga)$ have the same degrees. Since ogods do not contain dilated edges, and the dilation subgraphs of $\Ga$ and $\Ga_e$ have the same sets of connected components, we see that there is a rank-preserving bijection between the ogods of the double covers $\pi_e:\tGa_{\te}\to \Ga_e$ and $\pi:\tGa\to \Ga$. Therefore
\begin{equation}
\Pr(\tGa_{\te}/\Ga_e)=\Pr(\tGa/\Ga).
\label{eq:decontr2}
\end{equation}

We now consider the edge types of $e$ and $\te$ and apply Lemma~\ref{lem:CD}.

\begin{enumerate}
    \item If $e\in E(\Ga)$ is a loop, then $\te\in E(\tGa)$ is also a loop (of half the length) because the root vertex of $e$ is dilated and hence has a unique preimage at which both ends of $\te$ are rooted. Then by Lemma~\ref{lem:CD} we have
\begin{equation}
J(\Ga)=\ell(e) J(\Ga_e),\quad J(\tGa)=\frac{\ell(e)}{2}J(\tGa_{\te}).
\label{eq:decontr3}
\end{equation}
Plugging Equations~\eqref{eq:decontr1},~\eqref{eq:decontr2}, and~\eqref{eq:decontr3} into Equation~\eqref{eq:polynomials3} and noting that $n_d(\tGa_{\te}/\Ga_e)=n_d(\tGa/\Ga)$, we see that Equation~\eqref{eq:polynomials3} holds for the double cover $\pi:\tGa\to \Ga$.

\item If $e\in E(\Ga)$ is a bridge, then $\te\in E(\tGa)$ is also a bridge because $\pi^{-1}(e)=\te$. By Lemma~\ref{lem:CD} we have
\begin{equation}
J(\Ga)=J(\Ga_e),\quad J(\tGa)=J(\tGa_{\te}),
\label{eq:decontr4}
\end{equation}
and in this case $n_d(\tGa_{\te}/\Ga_e)=n_d(\tGa/\Ga)-1$, since contracting $e$ joins the dilated end vertices of $e$ into a single dilated vertex. Plugging this and Equations~\eqref{eq:decontr1},~\eqref{eq:decontr2} and~\eqref{eq:decontr4} into Equation~\eqref{eq:polynomials3}, we see that Equation~\eqref{eq:polynomials3} holds for the double cover $\pi:\tGa\to \Ga$.

\item Finally, suppose that $e\in E(\Ga)$ is neither a bridge nor a loop. There exists a path in $\Ga$ between the (dilated) end vertices of $\te$ that bypasses $e$, and this path lifts to a path in $\tGa$ connecting the end vertices of $\te$ and bypassing $\te$. Hence $\te\in E(\tGa)$ is neither a bridge nor a loop, and by Lemma~\ref{lem:CD} we have
\begin{equation}
J(\Ga)=J(\Ga_e)+\ell(e) J(\Ga^e), \quad J(\tGa)=J(\tGa_{\te})+\frac{\ell(e)}{2}J(\tGa^{\te}),
\label{eq:decontr5}
\end{equation}
and $n_d(\tGa_{\te}/\Ga_e)=n_d(\tGa/\Ga)-1$ as in the previous case.

We need to additionally consider the harmonic double cover $\pi^e:\tGa^{\te}\to \Ga^e$ obtained by deleting the edges $e$ and $\te$. Since $\Ga^e$ has one fewer dilated edge than $\Ga$, we assume by induction that Equation~\eqref{eq:polynomials3} holds for the double cover $\pi^e:\tGa^{\te}\to \Ga^e$, and we have
\begin{equation}
m_d(\tGa^{\te}/\Ga^e)=m_d(\tGa/\Ga)-1,\quad
n_d(\tGa^{\te}/\Ga^e)=n_d(\tGa/\Ga).
\label{eq:decontr6}
\end{equation}
Since $e$ is dilated, there is again a bijection between the ogods of the double covers $\pi^e:\tGa^{\te}\to \Ga^e$ and $\pi:\tGa\to \Ga$. However, the ranks of the ogods may be different, since removing $e$ may increase the number of connected components of the dilation subgraph. We consider two subcases:

\begin{enumerate}
    \item The edge $e$ is not a bridge edge of the dilation subgraph $\Ga_{\dil}$, in which case $d(\tGa^{\te}/\Ga^e)=d(\tGa/\Ga)$. Given an ogod $F\subset E(\Ga)$ of $\Ga$, the connected component $\Ga_i$ of $\Ga\backslash F$ containing $e$ is not disconnected by removing $e$. It follows that the ranks of $F$ as an ogod on $\Ga$ and $\Ga^e$ agree, and hence $\Pr(\tGa/\Ga)=\Pr(\tGa^{\te}/\Ga^e)$. Plugging this and Equations~\eqref{eq:decontr1},~\eqref{eq:decontr2},~\eqref{eq:decontr5}, and~\eqref{eq:decontr6} into Equation~\eqref{eq:polynomials3}, we see that Equation~\eqref{eq:polynomials3} holds for the double cover $\pi:\tGa\to \Ga$.

    \item The edge $e$ is a bridge edge of the dilation subgraph $\Ga_{\dil}$, so that $d(\tGa^{\te}/\Ga^e)=d(\tGa/\Ga)+1$. Let $F\subset E(\Ga)$ be an ogod of $\Ga$, and let $\Ga_i$ be the connected component of $\Ga\backslash F$ containing $e$. By Lemma~\ref{lemma:ogods}, the dilation subgraph of $\Ga_i$ is connected and has the same genus as $\Ga_i$. It follows that $e$ is in fact a bridge edge of $\Ga_i$ itself, not just its dilation subgraph. Hence $\Ga^e\backslash F$ has one more connected component than $\Ga\backslash F$, so the rank of $F$ as an ogod of $\Ga^e$ is one greater than its rank as an ogod of $\Ga$, and therefore $\Pr(\tGa/\Ga)=\Pr(\tGa^{\te}/\Ga^e)/4$.  Plugging this and the remaining formulas into Equation~\eqref{eq:polynomials3}, we see that Equation~\eqref{eq:polynomials3} holds for the double cover $\pi:\tGa\to \Ga$.

\end{enumerate}

\end{enumerate}

\end{proof}

\subsection{The volumes of the tppavs} We now compute the relationship between the volumes of the three tppavs associated to a dilated double cover $\pi:\tGa\to\Ga$. The main result is the following theorem. 

\begin{theorem} 
Let $\pi:\tGa\to \Ga$ be a dilated double cover of metric graphs. The volume of the tropical Prym variety of $\pi$ is given by
\begin{equation}
    \Vol^2(\Prym(\tGa/\Ga)) = 2^{m_d(\tGa/\Ga) -n_d(\tGa/\Ga) + d(\tGa/\Ga)} \frac{\Vol^2(\Jac(\tGa))}{\Vol^2(\Jac(\Ga))},
\label{eq:dilatedvolume}
\end{equation}
where $m_d(\tGa/\Ga)$, $n_d(\tGa/\Ga)$, and $d(\tGa/\Ga)$ are respectively the numbers of edges, vertices, and connected components of the dilation subgraph $\Ga_{\dil}$.
\label{thm:B}
\end{theorem}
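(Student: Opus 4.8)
The plan is to compare the three Gram determinants $\Vol^2(\Jac(\tGa))$, $\Vol^2(\Jac(\Ga))$ and $\Vol^2(\Prym(\tGa/\Ga))$ directly, using the explicit description of the homology lattices $H_1(\tGa,\ZZ)$, $H_1(\Ga,\ZZ)$ and of the maps $\pi_*$, $\pi^*$ (see~\cite{rohrle2022tropical}). The starting point is the pair of adjunction identities for the edge-length pairings: for all $a,b\in H_1(\Ga,\ZZ)$ and $x\in H_1(\tGa,\ZZ)$ one has $[\pi^*a,x]_{\tGa}=[a,\pi_*x]_{\Ga}$, and hence $[\pi^*a,\pi^*b]_{\tGa}=2[a,b]_{\Ga}$ since $\pi_*\pi^*=2\cdot\mathrm{id}$ on chains. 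In particular $\pi^*$ is injective, and the sublattices $\pi^*H_1(\Ga,\ZZ)$ and $\Ker(\pi_*\colon H_1(\tGa,\ZZ)\to H_1(\Ga,\ZZ))$ of $H_1(\tGa,\ZZ)$ are mutually orthogonal for $[\cdot,\cdot]_{\tGa}$, intersect trivially, and have complementary ranks $g(\Ga)$ and $h$; so $L:=\pi^*H_1(\Ga,\ZZ)\oplus\Ker\pi_*$ is a finite-index sublattice of $H_1(\tGa,\ZZ)$, whose index I denote $j$.

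Because $[\cdot,\cdot]_{\tGa}$ is block diagonal on $L$ with blocks $2[\cdot,\cdot]_{\Ga}$ and the restriction of $[\cdot,\cdot]_{\tGa}$ to $\Ker\pi_*$, comparing the Gram determinant of $L$ with that of $H_1(\tGa,\ZZ)$ gives
\[
j^2\,\Vol^2(\Jac(\tGa))=2^{g(\Ga)}\,\Vol^2(\Jac(\Ga))\cdot G,\qquad G:=\det\bigl([x_i,x_j]_{\tGa}\bigr),
\]
for any basis $x_1,\dots,x_h$ of $\Ker\pi_*$. Now $\Ker\pi_*$ is exactly the lattice of the integral torus $(\Ker\Nm)_0$, and unwinding the construction of the induced polarization shows that $G$ is the volume of $(\Ker\Nm)_0$ equipped with the polarization induced from $\Jac(\tGa)$. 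By Proposition~\ref{prop:pp}, this induced polarization differs from the natural principal polarization of $\Prym(\tGa/\Ga)$ by an endomorphism of $\Ker\pi_*\otimes\QQ$ of determinant $2^{c}$ for an explicit exponent $c$ (depending on the dilation data), so $G=2^{c}\,\Vol^2(\Prym(\tGa/\Ga))$.

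It then remains to evaluate $j$ and to see that the exponents collapse. I would compute $j$ from the explicit bases of $H_1(\tGa,\ZZ)$ and $H_1(\Ga,\ZZ)$ adapted to the splitting of $\Ga$ into the dilation subgraph $\Ga_{\dil}$ and its complement: contracting $\Ga_{\dil}$ produces a cover for which $j$ is already understood, and the remaining gap between $H_1(\tGa,\ZZ)$ and $L$ is carried by the cycles of $\tGa$ lying over $\Ga_{\dil}$ and by the cycles joining distinct components of $\Ga_{\dil}$. Counting these yields $j^2=2^{g(\Ga)+c+m_d(\tGa/\Ga)-n_d(\tGa/\Ga)+d(\tGa/\Ga)}$. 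Substituting this together with $G=2^{c}\Vol^2(\Prym(\tGa/\Ga))$ into the displayed identity and cancelling the common factor $2^{g(\Ga)+c}$ gives
\[
\Vol^2(\Prym(\tGa/\Ga))=2^{m_d(\tGa/\Ga)-n_d(\tGa/\Ga)+d(\tGa/\Ga)}\,\frac{\Vol^2(\Jac(\tGa))}{\Vol^2(\Jac(\Ga))},
\]
which is the claim.

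The hard part is the bookkeeping in the last step: both the lattice index $j$ and the polarization exponent $c$ are powers of $2$ whose exponents are affine-linear in $g(\Ga)$, $m_d$, $n_d$ and $d$, and the point of the theorem is that after one divides out the $2^{g(\Ga)}$ coming from the factor-$2$ rescaling on $\pi^*H_1(\Ga,\ZZ)$ and the $2^{c}$ coming from the polarization type, precisely the first Betti number $m_d-n_d+d=b_1(\Ga_{\dil})$ of the dilation subgraph survives. Pinning down these two exponents is where the carefully chosen homology bases of~\cite{rohrle2022tropical} do the real work, and it is also where the free-versus-dilated dichotomy — visible in Example~\ref{ex:disc} — enters, since the free-cover relation $\Vol^2(\Jac(\tGa))=2\,\Vol^2(\Prym)\Vol^2(\Jac(\Ga))$ of~\cite{MR4382460} corresponds to a different value of the exponent.
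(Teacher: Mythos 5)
Your framework is the same one the paper uses: split $H_1(\tGa,\ZZ)$ up to finite index into the orthogonal pieces $\pi^*H_1(\Ga,\ZZ)$ and $\Ker\pi_*$, use $[\pi^*a,\pi^*b]_{\tGa}=2[a,b]_{\Ga}$ on the first piece, and use Proposition~\ref{prop:pp} to pass from the induced polarization to the principal one on the second. The adjunction identity, the orthogonality, and the block-diagonal Gram comparison $j^2\Vol^2(\Jac(\tGa))=2^{g(\Ga)}\Vol^2(\Jac(\Ga))\cdot G$ are all correct as you state them.

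The gap is that the two exponents on which everything turns are asserted rather than derived: you write $j^2=2^{g(\Ga)+c+m_d-n_d+d}$, which is precisely the form needed for the final cancellation, but you give no computation, and you describe the computation of $j$ as a delicate count of cycles over $\Ga_{\dil}$ and between its components. In fact both exponents fall out of Proposition~\ref{prop:dilatedbasis} with no further counting. With $A$, $B$, $C$ as in~\eqref{eq:ABC}, the lattice $\pi^*H_1(\Ga,\ZZ)$ is spanned by $\{\tal_i^++\tal_i^-,\ 2\tga_k\}$ and $\Ker\pi_*$ by $\{\tal_i^+-\tal_i^-,\ \tbe_j\}$, so the change-of-basis matrix from $\{\tal_i^{\pm},\tbe_j,\tga_k\}$ to the combined spanning set has determinant $\pm 2^{A+C}$, giving $j=2^{A+C}=2^{g(\Ga)}$ --- a value that does not depend separately on the dilation data, so no contraction argument is needed. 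Proposition~\ref{prop:pp} gives $c=A$, since the type $(1,\dots,1,2,\dots,2)$ with $B$ ones and $A$ twos has determinant $2^A$. Substituting, $\Vol^2(\Prym(\tGa/\Ga))=2^{\,2g(\Ga)-g(\Ga)-A}\,\Vol^2(\Jac(\tGa))/\Vol^2(\Jac(\Ga))=2^{\,C}\,\Vol^2(\Jac(\tGa))/\Vol^2(\Jac(\Ga))$, which is the theorem because $g(\Ga)-A=C=m_d-n_d+d$. So your asserted value $j^2=2^{2g(\Ga)}$ is correct, but the proof is incomplete until you justify it, and the justification is the explicit basis of Proposition~\ref{prop:dilatedbasis} --- exactly the computation the paper's own proof carries out.
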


Before giving the proof, we consider an elementary example.

\begin{example} Let $\Ga$ be a metric graph of genus $g$, and let $\pi:\tGa\to\Ga$ be the double cover such that $\Ga_{\dil}=\Ga$, so that $\pi$ is a factor two isometry. Since $g(\tGa)=g(\Ga)$ the Prym variety $\Prym(\tGa/\Ga)$ is a point, and its (zero-dimensional) volume is formally equal to one. On the other hand, the exponent in the right hand side of Equation~\eqref{eq:dilatedvolume} is the genus of $\Ga$, so we see that
\[
\Vol^2(\Jac(\tGa))=2^{-g(\Ga)}\Vol^2(\Jac(\Ga)).
\]
This clearly agrees with Theorem~\ref{thm:ABKS}, since each edge has half the length in $\tGa$ as in $\Ga$, and thus the Jacobians of $\tGa$ and $\Ga$ differ by scaling by a factor of $2$.
    
\end{example}

The principal technical result required for the proof is Proposition~\ref{prop:dilatedbasis}, which calculates the pushforward, pullback, and involution maps
\[
\pi_*:H_1(\tGa,\ZZ)\to H_1(\Ga,\ZZ), \quad \pi_*:H_1(\tGa,\ZZ)\to H_1(\Ga,\ZZ), \quad \iota_*:H_1(\tGa,\ZZ)\to H_1(\tGa,\ZZ)
\]
in terms of explicit bases of $H_1(\tGa,\ZZ)$ and $H_1(\Ga,\ZZ)$. This result recently appeared in~\cite{rohrle2022tropical}, improving and correcting earlier results in~\cite{MR4261102}, and we restate it here for convenience. We then calculate the volumes of the tppavs using Equation~\eqref{eq:Grammian}. We also note that, unlike Theorem~\ref{thm:A}, the relationship between the volumes in the case of a free double cover is given by a different formula (Equation~\eqref{eq:freevolume}, which differs from Equation~\eqref{eq:dilatedvolume} by a factor of two). Morally, this is due to the fact that the kernel of the norm map $\Nm:\Jac(\tGa)\to \Jac(\Ga)$ has two connected components if $\pi:\tGa\to \Ga$ is free and one if it is dilated (see Theorem 1.5.7 in~\cite{MR4261102}).




Let $\pi:\tGa\to \Ga$ be a dilated double cover, and introduce the invariants
\begin{equation} \label{eq:ABC}
A = g(\Ga)-m_d+n_d-d, \quad
B = d-1, \quad
C = m_d-n_d+d,
\end{equation}
where $m_d$, $n_d$, and $d$ denote respectively the number of edges, vertices, and connected components of the dilation subgraph $\Ga_{\dil}$. We note that
\[
A+B=g(\Ga)-m_d+n_d-1=|E(\Ga)|-|V(\Ga)|-m_d+n_d=g(\tGa)-g(\Ga)
\]
is the dimension of the Prym variety of the double cover $\pi:\tGa\to \Ga$. We explicitly describe the induced maps on the homology groups:

\begin{proposition}[Proposition 4.20 in~\cite{rohrle2022tropical}] \label{prop:dilatedbasis} Let $\pi:\tGa\to\Ga$ be a dilated double cover of metric graphs. There exists a basis $\al_1,\ldots,\al_A$, $\ga_1,\ldots,\ga_C$
    of $H_1(\Ga,\ZZ)$ and a basis $\tal^{\pm}_1,\ldots,\tal^{\pm}_A$,  $\tbe_1,\ldots,\tbe_B$, $\tga_1,\ldots,\tga_C$
    of $H_1(\tGa,\ZZ)$ such that
    \begin{align*}
        \iota_*(\tal^{\pm}_i) &= \tal^{\mp}_i,
        & \pi_*(\tal^{\pm}_i) &= \al_i,
        & \pi^*(\al_i) &= \tal^+_i+\tal^-_i,
        & i&=1,\ldots,A, \\
        \iota_*(\tbe_j) &= -\tbe_j,
        & \pi_*(\tbe_j) &= 0,
        &&& j&=1,\ldots,B,\\
        \iota_*(\tga_k) &= \tga_k,
        & \pi_*(\tga_k) &= \ga_k,
        & \pi^*(\ga_k) &= 2\tga_k,
        & k&=1,\ldots,C.
    \end{align*}
\end{proposition}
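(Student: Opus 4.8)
The plan is to build the stated bases by reducing to a convenient model and then bootstrapping from the already-proved edge-free case, using the contraction operations introduced in the Setup section. First I would observe that, as in the proof of Theorem~\ref{thm:A}, edge dilation can be removed: contracting each connected component of the dilation subgraph $\Ga_{\dil}$ to a single dilated vertex produces an edge-free double cover $\pi':\tGa'\to\Ga'$ with the same invariants $A,B,C$ (since $g(\tGa')-g(\Ga')=g(\tGa)-g(\Ga)$, and $m_d$, $n_d$, $d$ are replaced by $0$, $d$, $d$, which leaves $A,B,C$ unchanged). Contracting a forest of dilated edges is a homotopy equivalence on both $\tGa$ and $\Ga$ and is compatible with $\pi_*$, $\pi^*$, and $\iota_*$ up to the explicit rescaling $\pi^*(e)=2\te$ on dilated edges recorded in~\eqref{eq:pb}; so it suffices to prove the proposition for edge-free covers, and then transport the bases back.

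For an edge-free cover, I would choose a spanning tree $T\subset\Ga$ and analyze the three kinds of generators of $H_1(\tGa,\ZZ)$ coming from the structure of $\pi$. The key structural fact is that $\Ga$ has $d$ dilated vertices (the components of $\Ga_{\dil}$), each with a single preimage in $\tGa$, while the rest of $\Ga$ is doubly covered. Pick the spanning tree of $\Ga$ and understand its preimage in $\tGa$: over the undilated part it splits into two copies, and these get glued together through the dilated vertices. Counting shows the preimage of $T$ in $\tGa$ is connected with first Betti number $d-1$; its independent cycles give the $\tbe_j$, which are supported over $T$, hence satisfy $\pi_*(\tbe_j)=0$, and by construction $\iota_*$ swaps the two sheets of $T$ over the undilated part and fixes the dilated vertices, so one arranges $\iota_*(\tbe_j)=-\tbe_j$ after a suitable integral change of basis (diagonalizing the order-two involution on the anti-invariant-modulo-invariant part — here one must be careful that the relevant lattice quotient is genuinely free and that $-1$-eigenvectors exist integrally, which is where the combinatorics of how the $T$-sheets meet at the $d$ dilated vertices enters). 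The remaining edges $e\in E(\Ga)\setminus T$ fall into two classes: those $e$ whose fundamental cycle $\ga_e$ in $\Ga$ passes through a dilated vertex, and those whose fundamental cycle lies entirely in the undilated part. For the former, $\pi^{-1}(e)$ is a single edge $\te$, the lifted cycle $\tga$ is $\iota_*$-invariant, $\pi_*(\tga)=\ga_e$, and $\pi^*(\ga_e)=2\tga$; these give the $\ga_k,\tga_k$ and there are $C=m_d-n_d+d$ of them by an Euler-characteristic count on the dilated locus. For the latter, $\pi^{-1}(e)=\{\te^+,\te^-\}$ and the two lifts of the cycle give $\tal^\pm_i$ with $\iota_*$ swapping them, $\pi_*(\tal^\pm_i)=\al_i$, $\pi^*(\al_i)=\tal^+_i+\tal^-_i$; there are $A$ of these.

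The main obstacle I anticipate is two-fold: first, verifying that the vectors produced this way actually form a $\ZZ$-\emph{basis} of $H_1(\tGa,\ZZ)$ (not merely a $\QQ$-basis or a finite-index sublattice) — this requires a careful rank count ($2A+B+C=2A+(d-1)+(m_d-n_d+d)$ must equal $g(\tGa)=2g(\Ga)-2n_d+m_d-1+\text{corrections}$, matching $A+B=g(\tGa)-g(\Ga)$ and $A+C=g(\Ga)$) together with an argument that the change-of-basis matrix from the standard tree-cycle basis is unimodular, most cleanly by exhibiting the cycles as coming from an explicit partition of the non-tree edges and tree-cycles. Second, the integral diagonalization of $\iota_*$ on the tree-supported cycles $\tbe_j$ needs the observation that the invariant part there is trivial (the involution acts freely on the two undilated sheets of $T$ away from the dilated vertices), so the fixed subgroup is spanned by the $\tga_k$ alone and the complement splits off a free $\iota_*=-1$ summand; I would handle this by an explicit inductive resolution of dilated vertices via loop attachments exactly as in Figure~\ref{fig:addloop}, tracking how each new loop contributes one $\tga$-type generator when the loop sits over a dilated locus and how the $\tbe$'s assemble. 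Since the statement is quoted verbatim from~\cite{rohrle2022tropical}, I would at this point simply cite that reference for the full verification and record only the reduction to the edge-free case plus the generator bookkeeping above.
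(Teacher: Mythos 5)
The paper offers no proof of this proposition: it is quoted verbatim from \cite{rohrle2022tropical} (Proposition 4.20 there), with the surrounding text stating that the computation is restated ``for convenience.'' So there is no in-paper argument to measure your sketch against, and your closing fallback --- cite the reference --- is in fact exactly what the authors do. Judged on its own terms, however, your sketch has a genuine gap in its very first step. Contracting each connected component of $\Ga_{\dil}$ to a dilated vertex is a homotopy equivalence only when $\Ga_{\dil}$ is a forest; in general it kills $b_1(\Ga_{\dil})=m_d-n_d+d=C$ independent cycles in both $\Ga$ and $\tGa$. Substituting $(m_d,n_d,d)\mapsto(0,d,d)$ into~\eqref{eq:ABC} gives $C'=0$ (as it must, since $C=0$ for every edge-free cover), so your claim that $A,B,C$ are all preserved is false, and the classes destroyed by the contraction are precisely the $\ga_k$ and $\tga_k$ that the proposition is partly about: one checks from~\eqref{eq:pb} that $\pi^*(\ga)=2\tga$ forces $\ga$ to be supported on dilated edges, so the $\ga_k$ form a basis of $H_1(\Ga_{\dil},\ZZ)\subset H_1(\Ga,\ZZ)$, which has rank exactly $C$ and dies under your reduction. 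Consequently the edge-free case cannot ``bootstrap'' to the general case by this contraction without a separate argument splicing the $C$ lost classes back in and re-verifying integrality of the combined basis.

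The second paragraph compounds this: inside the supposedly edge-free model you produce the $\ga_k$ from non-tree edges whose fundamental cycle meets a dilated vertex, asserting that such an edge has a single preimage. In an edge-free cover every edge is undilated and has two preimages, and a cycle through a dilated vertex that contains undilated edges is (after decomposing its preimage at the dilated vertices) of $\al$-type, not $\ga$-type --- your own count $C=m_d-n_d+d$ even evaluates to $0$ there. The parts of your sketch that do work are the $\tbe_j$'s: $\pi^{-1}(T)$ is indeed connected with $b_1=d-1$ for an edge-free cover with $d\ge 1$, and the anti-invariance $\iota_*(\tbe_j)=-\tbe_j$ is automatic from $\pi^*\pi_*=1+\iota_*$ once $\pi_*(\tbe_j)=0$, so no integral diagonalization is needed. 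The genuinely hard point --- that the $2A+B+C$ proposed classes form a $\ZZ$-basis of $H_1(\tGa,\ZZ)$ rather than a finite-index sublattice --- is the content of the cited Proposition 4.20, and your sketch defers it entirely; note that any unimodularity defect here would change the volume computation in Theorem~\ref{thm:B} by a square factor, so it cannot be waved away.
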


We now show how to define a principal polarization on the tropical Prym variety $\Prym(\tGa/\Ga)$ associated to a dilated double cover $\pi:\tGa\to \Ga$. Recall that the underlying integral torus of $\Prym(\tGa/\Ga)$ is given by the triple $(K,K',[\cdot,\cdot]_P)$, where $K=(\co \pi^*)^{tf}$, $K'=\Ker \pi_*$, and the intersection pairing $[\cdot,\cdot]_P:K'\times K\to \RR$ is induced from the pairing $H_1(\tGa,\ZZ)\times H_1(\tGa,\ZZ)\to \RR$ on $\tGa$. The principal polarization $\xi=\Id:H_1(\tGa, \ZZ)\to H_1(\tGa, \ZZ)$ induces a polarization $\xi|_P:K'\to K$, which is not principal in general. The structure of this polarization was computed in~\cite{rohrle2022tropical}.

\begin{proposition} [Proposition 4.21 in~\cite{rohrle2022tropical}] \label{prop:pp} There exists a principal polarization $\zeta:K'\to K$ on $\Prym(\tGa/\Ga)$ with respect to which the induced polarization $\xi|_P$ has type (i.e.~Smith normal form) $(1,\ldots,1,2,\ldots,2)$, where the number of $1$'s and $2$'s is equal to $B$ and $A$, respectively. 
\end{proposition}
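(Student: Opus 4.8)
The plan is to prove this by an explicit computation in the bases furnished by Proposition~\ref{prop:dilatedbasis}. First I would write down concrete $\ZZ$-bases of the two lattices attached to $\Prym(\tGa/\Ga)$. Combining Equation~\eqref{eq:pb} with the formulas of Proposition~\ref{prop:dilatedbasis}, one checks that $K'=\Ker\pi_*$ has basis $\tde_i:=\tal^+_i-\tal^-_i$ for $i=1,\dots,A$ together with $\tbe_1,\dots,\tbe_B$, that $\Im\pi^*$ is spanned by the $\tal^+_i+\tal^-_i$ and the $2\tga_k$, so that $\co\pi^*$ has torsion subgroup $(\ZZ/2)^C$ generated by the classes $[\tga_k]$, and hence that $K=(\co\pi^*)^{tf}$ has basis $\bar\al_i:=[\tal^+_i]$ for $i=1,\dots,A$ and $\bar\be_j:=[\tbe_j]$ for $j=1,\dots,B$. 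In these bases the inclusion $i_\#:K'\hookrightarrow H_1(\tGa,\ZZ)$ is tautological and the projection $i^\#:H_1(\tGa,\ZZ)\to K$ sends $\tal^\pm_i\mapsto\pm\bar\al_i$, $\tbe_j\mapsto\bar\be_j$ and $\tga_k\mapsto 0$, so the induced polarization $\xi|_P=i^\#\circ i_\#$ satisfies $\xi|_P(\tde_i)=2\bar\al_i$ and $\xi|_P(\tbe_j)=\bar\be_j$. Thus $\xi|_P$ is given by the diagonal matrix $\mathrm{diag}(2,\dots,2,1,\dots,1)$ with $A$ twos and $B$ ones, and in particular its Smith normal form is $(1,\dots,1,2,\dots,2)$ with $B$ ones and $A$ twos.

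Next I would pin down the intersection pairing and verify that $\xi|_P$ is genuinely a polarization. Since $\pi\circ\iota=\pi$, the edge-length pairing $[\cdot,\cdot]_{\tGa}$ on $H_1(\tGa,\ZZ)$ is $\iota_*$-invariant; by Proposition~\ref{prop:dilatedbasis}, $K'=H_1(\tGa,\ZZ)^{\iota_*=-1}$ and the saturation of $\Im\pi^*$ is $H_1(\tGa,\ZZ)^{\iota_*=+1}$, and these two sublattices are $[\cdot,\cdot]_{\tGa}$-orthogonal. It follows that $[[x],\kappa']_P:=[x,\kappa']_{\tGa}$ is a well-defined pairing on $K\times K'$ and that, for $\lambda',\mu'\in K'$, $[\xi|_P(\lambda'),\mu']_P=[\lambda',\mu']_{\tGa}$; hence the bilinear form on $K'$ induced by $\xi|_P$ is the restriction of the edge-length pairing, which is symmetric and positive definite. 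Its Gram matrix with respect to the basis $\tde_i,\tbe_j$ can be written out in terms of the edge lengths using Proposition~\ref{prop:dilatedbasis}.

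To produce the principal polarization I would set $\zeta(\tde_i)=\bar\al_i$ and $\zeta(\tbe_j)=\bar\be_j$. This is by construction a $\ZZ$-module isomorphism $K'\to K$, it agrees with $\xi|_P$ on the span of the $\tbe_j$ and is exactly one half of $\xi|_P$ on the span of the $\tde_i$, and $\zeta^{-1}\circ\xi|_P=\mathrm{diag}(2I_A,I_B)$, which yields the asserted type. What remains --- and this is the heart of the matter --- is to check that $\zeta$ is still a polarization, i.e. that $(\lambda',\mu')\mapsto[\zeta(\lambda'),\mu']_P$ is symmetric and positive definite. Comparing Gram matrices, symmetry of this form is equivalent to the single orthogonality condition $[\tde_i,\tbe_j]_{\tGa}=0$ for all $i,j$; granting that, positive-definiteness is automatic, since the form then becomes the block-diagonal form with blocks $\tfrac12[\tde_i,\tde_{i'}]_{\tGa}$ and $[\tbe_j,\tbe_{j'}]_{\tGa}$, each positive definite by the previous step. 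So the whole argument reduces to making the span of the undilated ``doubled'' cycles $\tde_i=\tal^+_i-\tal^-_i$ orthogonal to the span of the $B=d(\tGa/\Ga)-1$ ``connecting'' cycles $\tbe_j$.

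I expect this orthogonalization to be the main obstacle. It should be handled by refining the basis of Proposition~\ref{prop:dilatedbasis}: one is free to replace each $\tbe_j$ by $\tbe_j+\sum_i c_{ji}\tde_i$ and to modify the $\tal^\pm_i$ (which shifts $\tde_i$ by even multiples of the $\tbe_j$) without disturbing any of the relations in Proposition~\ref{prop:dilatedbasis}, and the geometric content is that the undilated cycles $\al_i$ on the base can be chosen so that their lifts are disjoint from the part of $\tGa$ lying over $\Ga_{\dil}$ and over the undilated edges joining its $d(\tGa/\Ga)$ components --- precisely the part carrying the $\tbe_j$. Once the $\tde$-span and the $\tbe$-span are orthogonal, $\zeta$ is the desired principal polarization, $\xi|_P$ has Smith normal form $(1,\dots,1,2,\dots,2)$ with $B$ ones and $A$ twos with respect to it, and, if wanted, $\Vol^2(\Prym(\tGa/\Ga))$ follows from Equation~\eqref{eq:Grammian}.
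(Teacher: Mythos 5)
You should know at the outset that the paper does not prove this statement at all: it is quoted verbatim from~\cite{rohrle2022tropical} (Proposition 4.21 there), so your attempt has to stand entirely on its own. The first half of it does: your bases for $K'=\Ker\pi_*$ and $K=(\co\pi^*)^{tf}$, the computation $\xi|_P(\tal^+_i-\tal^-_i)=2\bar\al_i$, $\xi|_P(\tbe_j)=\bar\be_j$, and the conclusion that $\xi|_P$ has Smith normal form $(1,\ldots,1,2,\ldots,2)$ with $B$ ones and $A$ twos are all correct, and the Smith normal form is independent of the choice of principal $\zeta$. You also correctly isolate the crux: your candidate $\zeta$ is block-diagonal for the splitting $K'=\langle\tde_i\rangle\oplus\langle\tbe_j\rangle$, and its symmetry is equivalent to $[\tde_i,\tbe_j]_{\tGa}=0$ for all $i,j$.

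The gap is that this orthogonality cannot in general be arranged, either by the integral basis changes you allow or by the geometric rerouting you suggest. Take $\Ga$ to be two dilated vertices joined by three undilated edges $e_1,e_2,e_3$ of generic lengths $x_i$, so $A=2$, $B=1$, $C=0$. Here $K'$ is spanned by the pairwise orthogonal cycles $\nu_i=\te^+_i-\te^-_i$ with $[\nu_i,\nu_i]_{\tGa}=2x_i$, and an admissible choice from Proposition~\ref{prop:dilatedbasis} gives $\tde_1=\nu_1-\nu_2$, $\tde_2=\nu_2-\nu_3$, $\tbe_1=\nu_1$, whence $[\tde_1,\tbe_1]_{\tGa}=2x_1\neq 0$. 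Your allowed moves shift $\tde_i$ by \emph{even} integer multiples of $\tbe_1$ and shift $\tbe_1$ by integer combinations of the $\tde_i$; but the unique way to make $\tbe_1$ orthogonal to the $\tde$-span has Gram--Schmidt coefficients such as $x_1(x_2+x_3)/(x_1x_2+x_1x_3+x_2x_3)$, which are not integers for generic lengths, and no even shift of the $\tde_i$ lands them in $\nu_1^{\perp}$. The geometric escape also fails here, since every cycle of $\Ga$ necessarily traverses edges joining the two components of $\Ga_{\dil}$. So in this example no principal polarization of the block-diagonal shape you propose exists, and the existence claim --- which you yourself call the heart of the matter --- is not established; whatever argument the cited source uses must produce $\zeta$ by a genuinely different mechanism than orthogonal splitting plus halving.
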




We are now ready to prove Theorem~\ref{thm:B}.

\begin{proof}[Proof of Theorem~\ref{thm:B}]

Let $\tcalB=\{\tal^{\pm}_i,\tbe_j,\tga_k\}$ be the $\ZZ$-basis of $H_1(\tGa,\ZZ)$ constructed in Proposition~\ref{prop:dilatedbasis}, then by Equation~\eqref{eq:Grammian} we have
\[
\Vol^2(\Jac(\tGa))=\Gram(\tcalB)_{\tGa},
\]
where the subscript $\tGa$ is there to remind us that the entries in the Gram determinant are computed using the inner product $(\cdot,\cdot)_{\tGa}$. Introduce the following alternative $\QQ$-basis $\tcalB'$ of $H_1(\tGa,\ZZ)$:
\[
\tcalB'=\tcalB'_1\cup\tcalB'_2,\quad \tcalB'_1=\{\tal^+_i+\tal^-_i,2\tga_k\},\quad \tcalB'_2=\{\tal^+_i-\tal^-_i,\tbe_j\}.
\]
The change-of-basis matrix from $\tcalB$ to $\tcalB'$ has determinant $\pm 2^{-A-C}$ (see Equation~\eqref{eq:ABC}), hence
\[
\Gram(\tcalB)_{\tGa}=2^{-2A-2C}\Gram(\tcalB')_{\tGa}.
\]
Now let $\tde_1\in \tcalB'_1$ and $\tde_2\in \tcalB'_2$ be two cycles. By Proposition~\ref{prop:dilatedbasis} we know that $\iota_*(\tde_1)=\tde_1$ and $\iota_*(\tde_2)=-\tde_2$. Since $\iota_*$ preserves the inner product $(\cdot,\cdot)_{\tGa}$, we have
\[
(\tde_1,\tde_2)_{\tGa}=(\iota_*(\tde_1),\iota_*(\tde_2))_{\tGa}=-(\tde_1,\tde_2)_{\tGa}.
\]
Hence the elements of $\tcalB'_1$ and $\tcalB'_2$ are orthogonal to one another, and therefore
\[
\Gram(\tcalB')_{\tGa}=\Gram(\tcalB'_1)_{\tGa}\Gram(\tcalB'_2)_{\tGa}.
\]
Now let $\calB=\{\al_i,\ga_k\}$ be the basis of $H_1(\Ga,\ZZ)$ given in Proposition~\ref{prop:dilatedbasis}, then $\Gram(\calB)_{\Ga}$ computes $\Vol^2(\Jac(\Ga))$ by Equation~\eqref{eq:Grammian}. On the other hand, $\pi^*(\calB)=\tcalB'_1$, and for any $\de_1,\de_2\in H_1(G,\ZZ)$ by Equation~\eqref{eq:pb} we have
\[
(\pi^*(\de_1),\pi^*(\de_2))_{\tGa}=2(\de_1,\de_2)_{\Ga},
\]
implying that
\[
\Gram(\tcalB'_1)_{\tGa}=2^{A+C}\Gram(\calB)_{\Ga}=2^{A+C}\Vol^2(\Jac(\Ga)).
\]
Finally, we note that $\tcalB'_2$ is a basis for $K'=\Ker \pi_*$, so the Gram determinant $\Gram(\tcalB'_2)_{\tGa}$ computes the square of the volume of the tropical Prym variety, but with respect to the polarization $\xi|_P$ induced from $\Jac(\tGa)$. By 
Proposition~\ref{prop:pp}, the volume with respect to the intrinsic principal polarization is obtained by re-scaling as follows:
\[
\Vol^2(\Prym(\tGa/\Ga))=\Gram(\tcalB'_2)_P=2^{-A}\Gram(\tcalB'_2)_{\tGa}.
\]
Putting everything together, we see that
\[
\Vol^2(\Jac(\tGa))=\Gram(\tcalB)_{\tGa}=2^{-2A-2C}\Gram(\tcalB')_{\tGa}=2^{-2A-2C}\Gram(\tcalB'_1)_{\tGa}\Gram(\tcalB'_2)_{\tGa}=
\]
\[
=2^{-2A-2C}\cdot 2^{A+C}\Gram(\calB)_{\Ga}2^A \Gram(\tcalB)_P=2^{-C}\Vol^2(\Jac(\Ga))\Vol^2(\Prym(\tGa/\Ga)),
\]
which completes the proof.

\end{proof}

\subsection{Proof of the main theorem} We are now ready to put everything together.

\begin{proof}[Proof of Theorem~\ref{thm:main}] Let $\pi:\tGa\to \Ga$ be a dilated double cover of metric graphs, and let $m_d$, $n_d$, and $d$ denote respectively the number of edges, vertices, and connected components of the dilation subgraph $\Ga_{\dil}$. From Theorems~\ref{thm:B},~\ref{thm:ABKS}, and~\ref{thm:A}, respectively, we have
\[
\Vol^2(\Prym(\tGa/\Ga))=2^{m_d-n_d+d}\frac{\Vol^2(\Jac(\tGa))}{\Vol^2(\Jac(\Ga))}
= 2^{m_d-n_d+d} \frac{J(\tGa)}{J(\tGa)}
= 2^{1-d}\Pr(\tGa/\Ga),
\]
which by Definition ~\ref{def:prympolynomial} is the right hand side of~\eqref{eq:mainformula}.

\end{proof}

\section*{}
\bibliographystyle{alpha}
\addcontentsline{toc}{section}{References}
\bibliography{volume.bib}
\end{document}